\numberwithin{equation}{section}
\theoremstyle{definition}
\newtheorem{theorem}[equation]{Theorem}
\newtheorem{lemma}[equation]{Lemma}
\newtheorem{corollary}[equation]{Corollary}
\newtheorem{definition}[equation]{Definition}
\newtheorem{proposition}[equation]{Proposition}
\renewcommand{\phi}{\varphi}
\newcommand{\I}{{\rm i}}
\newcommand{\E}{{\rm e}}
\newcommand{\ti}{\tilde}
\renewcommand{\(}{\bigl(}
\renewcommand{\)}{\bigr)\vphantom{)}}
\newcommand{\ip}[2]{\langle#1,#2\rangle}
\newcommand{\One}{{1\hskip-2.5pt{\rm l}}}
\newcommand{\ga}{\gamma}
\newcommand{\al}{\alpha}
\newcommand{\be}{\beta}
\newcommand{\Ec}{\mathcal E}
\newcommand{\la}{\lambda}
\newcommand{\Q}{\mathbb Q}
\newcommand{\R}{\mathbb R}
\newcommand{\C}{\mathbb C}
\newcommand{\Z}{\mathbb Z}
\newcommand{\dimensional}[1]{$#1$\nobreakdash-\hspace{0pt}dimensional}
\renewcommand*\l@section[2]{%
  \ifnum \c@tocdepth >\z@
    \addpenalty\@secpenalty
    \addvspace{0.25em \@plus\p@}%
    \setlength\@tempdima{2.5em}%
    \begingroup
      \parindent \z@ \rightskip \@pnumwidth
      \parfillskip -\@pnumwidth
      \leavevmode \bfseries
      \advance\leftskip\@tempdima
      \hskip -\leftskip
      #1\nobreak\hfil \nobreak\hb@xt@\@pnumwidth{\hss #2}\par
    \endgroup
  \fi}
\renewcommand*\numberline[1]{\hb@xt@\@tempdima{\hfil#1\hskip1em}}
\begin{document}

\title{Subproduct systems of Hilbert spaces: dimension two}

\author{Boris Tsirelson}

\date{}
\maketitle

\begin{abstract}
A subproduct system of two-dimensional Hilbert spaces can generate an Arveson
system of type $ I_1 $ only. All possible cases are classified up to
isomorphism. This work is triggered by a question of Bhat: can a subproduct
system of \dimensional{n} Hilbert spaces generate an Arveson system of type $
I\!I $ or $ I\!I\!I $? The question is still open for $ n>2 $.
\end{abstract}

\setcounter{tocdepth}{1}
\tableofcontents

\section[Discrete time]
 {\raggedright Discrete time}
\label{sec:1}
Subproduct systems of Hilbert spaces and Hilbert modules are introduced
recently by Shalit and Solel \cite{SS} and Bhat and Mukherjee \cite{BM}.

In this section, by a subproduct system I mean a discrete-time subproduct
system of two-dimensional Hilbert spaces (over $ \C $), defined as follows.

\begin{definition}\label{subproduct_system}
A \emph{subproduct system} consists of two-dimensional Hilbert spaces $ E_t $
for $ t = 1,2,\dots $ and isometric linear maps
\[
\be_{s,t} : E_{s+t} \to E_s \otimes E_t
\]
for $ s,t \in \{1,2,\dots\} $, satisfying the associativity condition:
the diagram\footnote{%
 Of course, $ \One_t : E_t \to E_t $ is the identity map.}
\[
\xymatrix{
 & E_{r+s+t} \ar[dl]_{\be_{r+s,t}} \ar[dr]^{\be_{r,s+t}}
\\
 E_{r+s} \otimes E_t \ar[dr]_{\be_{r,s}\otimes\One_t} & & E_r \otimes E_{s+t}
 \ar[dl]^{\One_r\otimes\be_{s,t}}
\\
 & E_r \otimes E_s \otimes E_t
}
\]
is commutative for all $ r,s,t \in \{1,2,\dots\} $.
\end{definition}

\begin{proposition}\label{1.2}
For every subproduct system there exist vectors $ x_t, y_t \in E_t $ for $ t
= 1,2,\dots $ such that one and only one of the following five conditions is
satisfied.

(1) There exists $ a \in [0,1) $ such that for all $ s,t \in \{1,2,\dots\} $
\begin{gather*}
\| x_t \| = \| y_t \| = 1 \, , \quad \ip{ x_t }{ y_t } = a^t \, , \\
\be_{s,t} (x_{s+t}) = x_s \otimes x_t \, , \quad
\be_{s,t} (y_{s+t}) = y_s \otimes y_t \, .
\end{gather*}

(2) There exists $ a \in [0,1) $ such that for all $ s,t \in \{1,2,\dots\} $
\begin{gather*}
\| x_t \| = \| y_t \| = 1 \, , \quad \ip{ x_t }{ y_t } = a^t \, , \\
\be_{s,t} (x_{s+t}) = \begin{cases}
 x_s \otimes x_t &\text{if $ s $ is even},\\
 x_s \otimes y_t &\text{if $ s $ is odd},
\end{cases} \quad \be_{s,t} (y_{s+t}) = \begin{cases}
 y_s \otimes y_t &\text{if $ s $ is even},\\
 y_s \otimes x_t &\text{if $ s $ is odd}.
\end{cases}
\end{gather*}

(3) There exists $ \la \in \C \setminus \{0\} $ such that for all $ s,t \in
\{1,2,\dots\} $
\begin{gather*}
\| x_t \| = 1 \, , \quad \| y_t \|^2 = 1 + |\la|^2 + |\la|^4 + \dots +
 |\la|^{2t-2} \, , \quad \ip{ x_t }{ y_t } = 0 \, , \\
\be_{s,t} (x_{s+t}) = x_s \otimes x_t \, , \quad
\be_{s,t} (y_{s+t}) = y_s \otimes x_t + \la^s x_s \otimes y_t \, .
\end{gather*}

(4) For all $ s,t \in \{1,2,\dots\} $
\begin{gather*}
\| x_t \| = \| y_t \| = 1 \, , \quad \ip{ x_t }{ y_t } = 0 \, , \\
\be_{s,t} (x_{s+t}) = x_s \otimes x_t \, , \quad
\be_{s,t} (y_{s+t}) = y_s \otimes x_t \, .
\end{gather*}

(5) For all $ s,t \in \{1,2,\dots\} $
\begin{gather*}
\| x_t \| = \| y_t \| = 1 \, , \quad \ip{ x_t }{ y_t } = 0 \, , \\
\be_{s,t} (x_{s+t}) = x_s \otimes x_t \, , \quad
\be_{s,t} (y_{s+t}) = x_s \otimes y_t \, .
\end{gather*}
\end{proposition}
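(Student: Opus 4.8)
The plan is to show that the problem reduces to classifying the single subspace $E_2\subseteq E_1\otimes E_1$, and then to read off the five normal forms. Composing the maps $\be$ and using associativity, one obtains well-defined isometric embeddings $E_t\hookrightarrow E_1^{\otimes t}$; after identifying each $E_t$ with its image, every $\be_{s,t}$ becomes an inclusion and $E_t\subseteq(E_1\otimes E_{t-1})\cap(E_{t-1}\otimes E_1)$ inside $E_1^{\otimes t}$. Since $\dim E_t=2$, this is already restrictive for $t=3$: the two $4$-dimensional subspaces $E_1\otimes E_2$ and $E_2\otimes E_1$ of the $8$-dimensional space $E_1^{\otimes3}$ must intersect in dimension at least $2$.

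The key step is to classify the rank-one (``product'') vectors in $E_2$. Writing $E_2=\spn(\xi,\eta)$ and identifying $E_1\otimes E_1$ with $2\times2$ matrices, $\det(\al\xi+\be\eta)$ is a homogeneous quadratic form in $(\al,\be)$, so $E_2$ either consists entirely of rank-one vectors, or contains exactly two product lines, or exactly one (a double root). In the first case a $2$-dimensional space of rank-one $2\times2$ matrices has either a common left factor or a common right factor, i.e.\ $E_2=v\otimes E_1$ or $E_2=E_1\otimes v$, which give cases (5) and (4). In the two-line case the two product vectors $p_1\otimes q_1$, $p_2\otimes q_2$ must have $p_1,p_2$ linearly independent and $q_1,q_2$ linearly independent (otherwise $E_2$ is of the previous type), and then the requirement $\dim\bigl((E_1\otimes E_2)\cap(E_2\otimes E_1)\bigr)\ge2$ becomes an explicit linear-algebra computation in coordinates adapted to $p_1,p_2$: it forces either $q_1\parallel p_1$ and $q_2\parallel p_2$, so $E_2=\spn(p_1\otimes p_1,\,p_2\otimes p_2)$ (case (1)), or $q_1\parallel p_2$ and $q_2\parallel p_1$, so $E_2=\spn(p_1\otimes p_2,\,p_2\otimes p_1)$ (case (2)); every other configuration gives intersection of dimension $<2$. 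In the one-line case the same kind of computation forces the product vector to be a square $p\otimes p$, and then choosing a unit $p_2\perp p$ and rescaling gives $E_2=\spn(p\otimes p,\,p_2\otimes p+\la\,p\otimes p_2)$ with $\la\ne0$ (case (3)).

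For each of these five subspaces one finishes by induction on $t$: starting from $x_1,y_1\in E_1$ extracted above and extending them through the relations in (1)--(5) to candidate vectors $x_t,y_t\in E_1^{\otimes t}$, one checks by a direct computation that $(E_1\otimes E_{t-1})\cap(E_{t-1}\otimes E_1)$ is again two-dimensional and equals $\spn(x_t,y_t)$, hence so does $E_t$; this simultaneously exhibits the vectors and verifies the stated norms and intertwining relations, with phases chosen so that $\ip{x_1}{y_1}\in[0,1)$ in cases (1) and (2). The five possibilities are mutually exclusive because the number of product lines of $E_2$ --- infinitely many, two, or one --- separates $\{(4),(5)\}$, $\{(1),(2)\}$ and $\{(3)\}$; a common left versus a common right factor distinguishes (5) from (4); and whether the two product vectors of $E_2$ are squares $z\otimes z$ (case (1)) or not (case (2)) distinguishes those. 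I expect the main obstacle to be the coordinate bookkeeping in the two-line and one-line cases --- showing that every configuration other than those listed violates $\dim\bigl((E_1\otimes E_2)\cap(E_2\otimes E_1)\bigr)\ge2$; the remaining steps are routine.
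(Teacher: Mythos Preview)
Your approach is correct and takes a genuinely different route from the paper. The paper does not classify $E_2$ directly; instead it invokes the classification of subproduct systems of \emph{linear} spaces from \cite{I} to obtain vectors $x_t,y_t$ satisfying the required $\be_{s,t}$-relations but with no control on norms or inner products, and then spends the proof adjusting the metric data case by case (normalizing, rotating phases, and in Case~3 subtracting a multiple of $x_t$ from $y_t$ to force orthogonality). Your proposal, by contrast, re-derives the linear classification from scratch: you analyze the product-vector structure of $E_2\subset E_1\otimes E_1$ via the determinant quadratic form, use the constraint $\dim\bigl((E_1\otimes E_2)\cap(E_2\otimes E_1)\bigr)\ge 2$ coming from $E_3$ to pin down the five normal forms, and then run an induction showing that this intersection stays two-dimensional, forcing $E_t=\spn(x_t,y_t)$. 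The paper's argument is shorter here only because the hard linear algebra is hidden in the citation; your argument is self-contained and makes explicit exactly where the two-dimensionality of $E_t$ is used. One point you pass over quickly is the metric fine-tuning: in Case~(2) the phase adjustment is not simply ``choose $\ip{x_1}{y_1}\in[0,1)$'' but involves rotating $x_{2n+1}$ and $y_{2n+1}$ by opposite phases, and in Case~(3) you should check that choosing $y_1\perp x_1$ at the start really propagates to $\ip{x_t}{y_t}=0$ for all $t$ (it does, by the recursion $c_{s+t}=c_s+\la^s c_t$). These are routine, but the paper treats them explicitly while you absorb them into ``phases chosen'' and ``rescaling''.
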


\begin{proof}
The five conditions evidently are pairwise inconsistent; we have to prove that
at least one of them is satisfied.

Ignoring the metric, that is, treating the Hilbert spaces $ E_t $ as just
linear spaces, and using the classification given in \cite[Sect.~7]{I} we get
linearly independent vectors $ x_t, y_t \in E_t $ that satisfy the formulas
for $ \be_{s,t} (x_{s+t}) $ and $ \be_{s,t} (y_{s+t}) $ but maybe violate the
formulas for $ \| x_t \| $, $ \| y_t \| $ and $ \ip{ x_t }{ y_t } $.

We postpone Case 3 and consider Cases 1, 2, 4 and 5. The right-hand sides of
the formulas for $ \be_{s,t} (x_{s+t}) $ and $ \be_{s,t} (y_{s+t}) $ are
tensor products. Taking into account that generally the relation $ x = y
\otimes z \ne 0 $ implies
\[
\frac{ x }{ \| x \| } = \frac{ y }{ \| y \| } \otimes \frac{ z }{ \| z \| } \,
,
\]
we normalize $ x_t, y_t $ (dividing each vector by its norm). Thus,
\[
\| x_t \| = \| y_t \| = 1 \quad \text{for all } t \, .
\]

Case 1. We have $ \ip{ x_{s+t} }{ y_{s+t} } = \ip{ \be_{s,t} (x_{s+t}) }{
\be_{s,t} (y_{s+t}) } = \ip{ x_s \otimes x_t }{ y_s \otimes y_t } = \ip{ x_s
}{ y_s } \ip{ x_t }{ y_t } $, therefore
\[
\ip{ x_t }{ y_t } = a^t \quad \text{for all } t \, ,
\]
where $ a = \ip{ x_1 }{ y_1 } $, $ |a| < 1 $. Replacing $ y_t $ with $ \E^{\I
t\phi} y_t $ for an appropriate $ \phi $ we get $ a \in [0,1) $.

Case 2. Denoting $ \ip{ x_t }{ y_t } $ by $ c_t $ we have
\[
c_{s+t} = \begin{cases}
 c_s c_t &\text{if $ s $ is even},\\
 c_s \overline c_t &\text{if $ s $ is odd},
\end{cases}
\]
thus $ c_{2n} = |c_1|^{2n} $, $ c_{2n+1} = c_1 |c_1|^{2n} $. Replacing $
x_{2n+1} $ with $ \E^{\I \phi} x_{2n+1} $ and $ y_{2n+1} $ with $ \E^{-\I
\phi} y_{2n+1} $  for an appropriate $ \phi $ we get $ c_1 \in [0,1) $.

Case 4. We have $ \ip{ x_{s+t} }{ y_{s+t} } = \ip{ x_s }{ y_s } $, thus $ \ip{
x_t }{ y_t } = \ip{ x_1 }{ y_1 } $. Replacing $ y_t $ with
\[
\frac{ y_t - \ip{ y_t }{ x_t } x_t }{ \| y_t - \ip{ y_t }{ x_t } x_t \| } =
\frac{ y_t - \ip{ y_1 }{ x_1 } x_t }{ \| y_t - \ip{ y_1 }{ x_1 } x_t \| }
\]
we get $ \ip{ x_t }{ y_t } = 0 $.

Case 5 is similar to Case 4.

The rest of the proof treats Case 3.

First, we may prove the condition $ \| y_t \|^2 = 1 + |\la|^2 + \dots +
|\la|^{2t-2} $ for $ t=1 $ only (that is, just $ \| y_1 \| = 1 $), since the
other conditions imply
\[
\| y_{s+t} \|^2 = \| \be_{s,t} (y_{s+t}) \|^2 = \| y_s \otimes x_t \|^2 + \|
\la^s x_s \otimes y_t \|^2 = \| y_s \|^2 + |\la|^{2s} \| y_t \|^2 \, .
\]
Second, the condition $ \| y_1 \| = 1 $ can be ensured by dividing all $ y_t $
by $ \| y_1 \| $. Thus, we need not bother about $ \| y_t \| $.

We still can normalize vectors $ x_t $ (but not $ y_t $), since $ \be_{s,t}
(x_{s+t}) = x_s \otimes x_t $. Thus,
\[
\| x_t \| = 1 \quad \text{for all } t \, .
\]
Denoting $ \ip{ x_t }{ y_t } $ by $ c_t $ we have
\[
c_{s+t} = c_s + \la^s c_t \, .
\]

Sub-case 3a: $ \la = 1 $. We have $ c_{s+t} = c_s + c_t $, thus $ c_t = t c_1 $
for all $ t $. Replacing $ y_t $ with $ y_t - t c_1 x_t $ we get $ \ip{ x_t }{
y_t } = 0 $.

Sub-case 3b: $ \la \ne 1 $. We may replace $ y_t $ with $ y_t + a ( \la^t - 1 )
x_t $ ($ a $ will be chosen later), since
\begin{multline*}
\be_{s,t} ( y_{s+t} + a ( \la^{s+t} - 1 ) x_{s+t} ) = \\
= y_s \otimes x_t + \la^s x_s \otimes y_t + a ( \la^{s+t} - 1 ) x_s \otimes
 x_t = \\
= ( y_s + a ( \la^s - 1 ) x_s ) \otimes x_t + \la^s x_s \otimes ( y_t + a (
\la^t - 1 ) x_t ) \, .
\end{multline*}
An appropriate choice of $ a $ gives $ c_1 = 0 $ and therefore $ c_t = 0 $ for
all $ t $.
\end{proof}

A system of vectors $ x_t, y_t $ cannot satisfy more than one of the five
conditions of Theorem \ref{1.2}, and moreover, the corresponding parameter ($
a $ or $ \la $, if any) is determined uniquely. We say that a system $
(x_t,y_t)_{t=1,2,\dots} $ is a \emph{basis} of type $ \Ec_1(a) $ if it
satisfies Condition (1) of Theorem \ref{1.2} with the given parameter $
a $. Bases of types $ \Ec_2(a) $, $ \Ec_3(\la) $, $ \Ec_4 $ and $ \Ec_5 $ are
defined similarly. By Theorem \ref{1.2}, every subproduct system has a
basis. It can have many bases, but they all are of the same type, which will
be shown in Lemma \ref{1.4}.

\begin{definition}
An \emph{isomorphism} between two subproduct systems $ (E_t,\be_{s,t}) $ and $
(F_t,\ga_{s,t}) $ consists of unitary operators $ \theta_t : E_t \to F_t $ for
$ t=1,2,\dots $ such that the diagram
\[
\xymatrix{
 E_{s+t} \ar[d]_{\theta_{s+t}} \ar[r]^{\be_{s,t}} & E_s \otimes E_t
 \ar[d]^{\theta_s \otimes \theta_t}
\\
 F_{s+t} \ar[r]^{\ga_{s,t}} & F_s \otimes F_t
}
\]
is commutative for all $ t \in \{1,2,\dots\} $.
\end{definition}

Subproduct systems having a basis of type $ \Ec_1(a) $ for a given $ a $
evidently exist and evidently are mutually isomorphic. The same holds for
the other types. Up to isomorphism we have subproduct systems 
\begin{xalignat*}{2}
& \Ec_1 (a) & & \text{for } a \in [0,1) \, , \\
& \Ec_2 (a) & & \text{for } a \in [0,1) \, , \\
& \Ec_3(\la) & & \text{for } \la \in \C \setminus \{0\} \, , \\
& \Ec_4 \, , \\
& \Ec_5 \, .
\end{xalignat*}

\begin{lemma}\label{1.4}
The subproduct systems listed above are mutually non-isomorphic.
\end{lemma}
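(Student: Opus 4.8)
The plan is to find invariants that distinguish the listed subproduct systems, then verify that each must be preserved by any isomorphism. An isomorphism $(\theta_t)$ carries a basis $(x_t,y_t)$ of one system to vectors $(\theta_t x_t,\theta_t y_t)$ satisfying the same $\be$-relations in the target, hence (after possibly re-extracting a basis via Proposition~\ref{1.2}, but in fact the $\theta_t$-images already work) it shows that the target has a basis of the same type. By the uniqueness remark following Proposition~\ref{1.2} — that a single system of vectors satisfies at most one of the five conditions, with the parameter determined — it therefore suffices to prove the assertion I attributed above to Lemma~\ref{1.4} but restated as: \emph{if two of the listed systems are isomorphic, they have bases of the same type with the same parameter}. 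So the real content is to pin down, for each system, intrinsic (basis-independent) quantities.

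First I would isolate the \emph{decomposable vectors}: call $v\in E_t$ a "product vector" if $\be_{s,t-s}(v)$ is a nonzero scalar multiple of a decomposable tensor $\xi\otimes\eta$ for every $1\le s<t$ (equivalently, using associativity, for $s=1,\dots,t-1$). In $\Ec_1(a)$, $\Ec_2(a)$, $\Ec_3(\la)$, $\Ec_4$, $\Ec_5$ the directions of $x_t$ (and, in cases $1,2,4,5$, of $y_t$) are product vectors; one checks from the explicit formulas how many such directions there are in $E_t$. In $\Ec_4$ and $\Ec_5$, $x_t$ and $y_t$ are both product vectors and $E_1$ has exactly two product directions with $\ip{x_1}{y_1}=0$, but the \emph{two-step} behaviour differs: in $\Ec_4$, $\be_{1,1}(y_2)=y_1\otimes x_1$, whereas in $\Ec_5$, $\be_{1,1}(y_2)=x_1\otimes y_1$ — this asymmetry between the two tensor legs is an isomorphism invariant (an isomorphism commutes with $\be$ and with the flip only trivially, so it cannot swap legs). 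This separates $\Ec_4$ from $\Ec_5$. In $\Ec_1(a)$ and $\Ec_2(a)$ there are exactly two product directions in each $E_t$; the angle between the two product directions in $E_1$ is determined by $|a|$, and since $a\in[0,1)$ this recovers $a$, while the parity pattern in the $\be$-maps again distinguishes type $1$ from type $2$ (in type $2$, $\be_{1,1}$ sends the $x$-product vector to $x_1\otimes y_1$, mixing the two families, which cannot happen in type $1$). In $\Ec_3(\la)$ there is only \emph{one} product direction in each $E_t$ (the direction of $x_t$; $y_t$ is genuinely entangled because $\la\ne0$), which already separates $\Ec_3(\la)$ from all the others; to recover $\la$ I would use the metric: from $\be_{s,t}(y_{s+t})=y_s\otimes x_t+\la^s x_s\otimes y_t$ and the known norms one extracts $|\la|$ via $\|y_2\|^2=1+|\la|^2$ (basis-independent up to the overall scaling of the $y$'s, which is itself normalized by $\|y_1\|=1$), and then $\arg\la$ from the relative phase in $\be_{1,1}(y_2)$ compared with $\be_{1,1}$ applied to a suitably normalized entangled vector — more carefully, I would argue that any two bases of type $\Ec_3$ are related by $x_t\mapsto e^{i\phi t}x_t$, $y_t\mapsto e^{i\psi}y_t + (\text{scalar})x_t$, and track how $\la$ transforms, concluding it is invariant.

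The main obstacle I anticipate is the bookkeeping in the last point: making rigorous the claim that the parameter is a true isomorphism invariant rather than merely a basis-dependent label. Concretely I expect to need a lemma describing \emph{all} bases of a given system — essentially the ambiguity in the choices made in the proof of Proposition~\ref{1.2} (phases on $x_t,y_t$, and in cases $3,4,5$ the addition of multiples of $x_t$ to $y_t$) — and to check that none of these moves, nor a unitary $\theta_t$, can change the type or the value of $a$ or $\la$. Once that normalization lemma is in hand, the proof reduces to comparing the explicit formulas in Proposition~\ref{1.2}, which is routine. I would organize the argument so that the product-vector count does the coarse separation ($\Ec_3$ vs.\ the rest; then within the rest, those where some $E_t$ has $<2$ or $=2$ product directions), the leg-asymmetry of $\be_{1,1}$ does the separation $\Ec_4$ vs.\ $\Ec_5$ and $\Ec_1$ vs.\ $\Ec_2$, and the metric (angles, norms) recovers the continuous parameters.
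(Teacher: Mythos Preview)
Your strategy is sound and shares its key invariant with the paper: both single out the product (decomposable) vectors in $\be_{1,1}(E_2)$ and, for $\Ec_1(a)$ and $\Ec_2(a)$, recover $a$ as the cosine of the angle between the two distinguished lines in $E_1$. The main difference is that the paper's proof is much shorter because it outsources the entire non-metric part to the companion paper \cite{I}: forgetting the Hilbert structure, the underlying subproduct systems of linear spaces already fall into types $\Ec_1,\Ec_2,\Ec_3(\la),\Ec_4,\Ec_5$, with $\la$ an invariant at that level. Hence the separation of the five types \emph{and} the invariance of $\la$ come for free from \cite{I}, and only the parameter $a$ genuinely needs the metric. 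Your self-contained route --- separating types by hand via product-vector counts and leg asymmetry, then pinning down $\la$ by classifying all bases of type $\Ec_3$ --- amounts to re-deriving the relevant fragments of \cite{I} together with what later becomes Lemma~\ref{1.11}; it works, but it is longer.

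One factual correction to your count: in $\Ec_4$ and $\Ec_5$ one has $\be_{1,1}(E_2)=E_1\otimes x_1$ (resp.\ $x_1\otimes E_1$), so \emph{every} nonzero vector of $E_2$ is a product vector, not just two directions. The count of product directions in $\be_{1,1}(E_2)$ thus gives a clean trichotomy --- one ($\Ec_3$), exactly two ($\Ec_1,\Ec_2$), all of them ($\Ec_4,\Ec_5$) --- after which the left/right shape of the image ($E_1\otimes x_1$ versus $x_1\otimes E_1$) separates $\Ec_4$ from $\Ec_5$, as you intended. Also, in your description of the basis ambiguity for $\Ec_3$, the move $y_t\mapsto e^{i\psi}y_t+(\text{scalar})x_t$ cannot have a nonzero scalar once $\ip{x_t}{y_t}=0$ is imposed; the actual freedom is $x_t\mapsto e^{ict}x_t$, $y_t\mapsto e^{ib}e^{ict}y_t$, under which $\la$ is visibly unchanged.
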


\begin{proof}
Ignoring the metric we get subproduct systems of linear spaces classified in
\cite[Sect.~7]{I}:
\[
\begin{array}{ccc}
\text{\small subproduct system} & & \text{\small subproduct system of} \\
\text{\small of Hilbert spaces} & & \text{\small linear spaces, according to \cite{I}} \\
\\
\Ec_1(a) & & \Ec_1 \\
\Ec_2(a) & & \Ec_2 \\
\Ec_3(\la) & & \Ec_3(\la) \\
\Ec_4 & & \Ec_4 \\
\Ec_5 & & \Ec_5
\end{array}
\]
Isomorphism between subproduct systems of linear spaces is necessary for
isomorphism between subproduct systems of Hilbert spaces. It remains to prove
that $ \Ec_1(a) $ and $ \Ec_1(b) $ are isomorphic for $ a=b $ only, and the
same for $ \Ec_2(a), \Ec_2(b) $.

Case $ \Ec_1(a) $. All product vectors in $ \be_{1,1} (E_2) $ are of the form
$ c x_1 \otimes x_1 $ or $ c y_1 \otimes y_1 $ ($ c \in \C $), see \cite[Lemma
3.2]{I}. Two one-dimensional subspaces of $ E_1 $ are thus singled out. The
cosine of the angle between them is an invariant, equal to the parameter $ a
$.

Case $ \Ec_2(a) $ is similar.
\end{proof}

\begin{theorem}
Every subproduct system is isomorphic to one and only one of the subproduct
systems $ \Ec_1(a) $, $ \Ec_2(a) $, $ \Ec_3(\la) $, $ \Ec_4 $, $ \Ec_5 $.
\end{theorem}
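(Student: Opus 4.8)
The plan is to assemble the two facts already established: Proposition~\ref{1.2} supplies the isomorphism class, and Lemma~\ref{1.4} shows it is unique.

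For existence I would proceed as follows. Given an arbitrary subproduct system $(E_t,\be_{s,t})$, apply Proposition~\ref{1.2} to obtain vectors $x_t,y_t\in E_t$ satisfying one of the five conditions, say condition $(i)$ with its parameter $p$ (the number $a$ when $i\in\{1,2\}$, the number $\la$ when $i=3$, no parameter when $i\in\{4,5\}$). First I would note that $x_t$ and $y_t$ are linearly independent in every case: for $i\in\{1,2\}$ because $|\ip{x_t}{y_t}|=a^t<1=\|x_t\|\,\|y_t\|$, and for $i\in\{3,4,5\}$ because $x_t\perp y_t$ while $\|x_t\|\neq0\neq\|y_t\|$. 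Hence $(x_t,y_t)$ is a linear basis of the two-dimensional space $E_t$. Next, let $(E^\circ_t,\be^\circ_{s,t})$ denote the model system $\Ec_i(p)$ with its distinguished basis $(x^\circ_t,y^\circ_t)$; such models exist and carry such bases, and any two models with bases of the same type are isomorphic, as remarked just before Lemma~\ref{1.4}. The Gram matrix of $(x_t,y_t)$ is prescribed by the norm and inner-product data of condition $(i)$ alone, so it equals the Gram matrix of $(x^\circ_t,y^\circ_t)$; therefore there is a unique unitary $\theta_t:E_t\to E^\circ_t$ with $\theta_t x_t=x^\circ_t$ and $\theta_t y_t=y^\circ_t$.

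The one step I would then have to carry out carefully is that $(\theta_t)_t$ is an isomorphism of subproduct systems, i.e. that $(\theta_s\otimes\theta_t)\circ\be_{s,t}=\be^\circ_{s,t}\circ\theta_{s+t}$ for all $s,t$. Both sides are linear maps $E_{s+t}\to E^\circ_s\otimes E^\circ_t$, so it is enough to evaluate them on $x_{s+t}$ and on $y_{s+t}$. Feeding $x_{s+t}$ (resp. $y_{s+t}$) through the left-hand side, using the formula for $\be_{s,t}$ from condition $(i)$ and then applying $\theta_s\otimes\theta_t$ to each tensor factor, produces a tensor-product expression in $x^\circ_s,y^\circ_s,x^\circ_t,y^\circ_t$ with the coefficients dictated by condition $(i)$; feeding the same vector through the right-hand side, using $\theta_{s+t}x_{s+t}=x^\circ_{s+t}$ (resp. $\theta_{s+t}y_{s+t}=y^\circ_{s+t}$) and then the formula for $\be^\circ_{s,t}$, produces exactly the same expression, because the model basis $(x^\circ_\bullet,y^\circ_\bullet)$ satisfies condition $(i)$ with the same parameter $p$. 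So the square commutes and $(E_t,\be_{s,t})\cong\Ec_i(p)$.

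For uniqueness I would argue by contradiction: if a subproduct system were isomorphic to two distinct members $S,S'$ of the list $\Ec_1(a),\Ec_2(a),\Ec_3(\la),\Ec_4,\Ec_5$, then $S$ and $S'$ would be isomorphic to each other, which Lemma~\ref{1.4} forbids. This finishes the proof. The argument is essentially bookkeeping; the only place where something could go wrong is the verification in the previous paragraph --- turning ``a basis of type $\Ec_i(p)$'' into ``an isomorphism with the model $\Ec_i(p)$'' --- and that is the part I would write out in full detail.
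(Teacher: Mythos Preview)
Your proof is correct and follows the same approach as the paper, which simply says ``Combine Prop.~\ref{1.2} and Lemma~\ref{1.4}.'' You have spelled out the step the paper calls ``evident'' (just before Lemma~\ref{1.4}): that possessing a basis of a given type yields an isomorphism with the corresponding model system.
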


\begin{proof}
Combine Prop.~\ref{1.2} and Lemma \ref{1.4}.
\end{proof}

An automorphism of a subproduct system is its isomorphism to itself. Given a
basis $ (x_t,y_t)_t $ of a subproduct system, each automorphism $ (\theta_t)_t
$ transforms it into another basis $ (\theta_t x_t, \theta_t y_t)_t $, which
leads to a bijective correspondence between bases and automorphisms (as long
as the initial basis is kept fixed).  All automorphisms are described below in
terms of a given basis $ (x_t,y_t)_t $. All bases are thus described.

Every subproduct system admits automorphisms
\begin{equation}\label{1.6}
\theta_t = \E^{\I ct} \cdot \One_t
\end{equation}
for $ c \in [0,2\pi) $, called trivial automorphisms. They commute with all
automorphisms.

The systems $ \Ec_1(a) $ and $ \Ec_2(a) $ admit a nontrivial automorphism
\begin{equation}\label{1.7}
\theta_t x_t = y_t \, , \quad \theta_t y_t = x_t \, .
\end{equation}
The system $ \Ec_1(0) $ (that is, $ \Ec_1(a) $ for $ a = 0 $) admits
nontrivial automorphisms
\begin{equation}\label{1.8}
\theta_t x_t = x_t \, , \quad \theta_t y_t = \E^{\I bt} y_t
\end{equation}
for $ b \in [0,2\pi) $. The system $ \Ec_2(0) $ admits nontrivial
automorphisms
\begin{equation}\label{1.9}
\begin{gathered}
\theta_{2n} x_{2n} = x_{2n} \, , \quad \theta_{2n} y_{2n} = y_{2n} \, , \\
\theta_{2n-1} x_{2n-1} = \E^{\I b} x_{2n-1} \, , \quad \theta_{2n-1}
y_{2n-1} = \E^{-\I b} y_{2n-1}
\end{gathered}
\end{equation}
for $ b \in [0,2\pi) $.

The systems $ \Ec_3(\la) $, $ \Ec_4 $ and $ \Ec_5 $ admit nontrivial
automorphisms
\begin{equation}\label{1.10}
\theta_t x_t = x_t \, , \quad \theta_t y_t = \E^{\I b} y_t
\end{equation}
for $ b \in [0,2\pi) $.

\begin{lemma}\label{1.11}
For the systems $ \Ec_1(a) $ and $ \Ec_2(a) $ with $ a \ne 0 $, every
automorphism is the composition of a trivial automorphism and possibly the
automorphism \eqref{1.7}.

For the system $ \Ec_1(0) $, every automorphism is the composition of a
trivial automorphism, possibly the automorphism \eqref{1.7}, and possibly an
automorphism of the form \eqref{1.8}.

For the system $ \Ec_2(0) $, every automorphism is the composition of a
trivial automorphism, possibly the automorphism \eqref{1.7}, and possibly an
automorphism of the form \eqref{1.9}.

For the systems $ \Ec_3(\la) $, $ \Ec_4 $ and $ \Ec_5 $, every automorphism is
the composition of a trivial automorphism and possibly an automorphism of the
form \eqref{1.10}.
\end{lemma}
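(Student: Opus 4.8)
Fix a basis $(x_t,y_t)_t$ of the given type and an arbitrary automorphism $(\theta_t)_t$, and put $x_t'=\theta_t x_t$, $y_t'=\theta_t y_t$. Since the $\theta_t$ are unitary and the automorphism diagram commutes, $(x_t',y_t')_t$ again satisfies the defining relations of that type, with the same value of the parameter (if any), which is $\ip{x_1'}{y_1'}=\ip{x_1}{y_1}$. The plan is: first pin down $(x_1',y_1')$ in terms of $(x_1,y_1)$ by examining product vectors in $\be_{1,1}(E_2)$; then exhibit a composition $\phi$ of automorphisms \eqref{1.6}--\eqref{1.10}, of exactly the form claimed for the type, with $\phi_1 x_1=x_1'$ and $\phi_1 y_1=y_1'$; and finally show that $\phi^{-1}\circ(\theta_t)_t$ is the identity, so that $(\theta_t)_t=\phi$.

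For the first step I would argue as in the proof of Lemma \ref{1.4} (see \cite[Lemma 3.2]{I}), now applied to $(x_t',y_t')_t$. For the types $\Ec_1,\Ec_3,\Ec_4,\Ec_5$ the defining relation for $\be_{1,1}(x_2')$ shows $x_1'\otimes x_1'$ to be a \emph{symmetric} product vector in $\be_{1,1}(E_2)$; the lines carrying a symmetric product vector are $[x_1\otimes x_1]$ and $[y_1\otimes y_1]$ for $\Ec_1$, and only $[x_1\otimes x_1]$ for $\Ec_3(\la),\Ec_4,\Ec_5$ --- for $\Ec_4$ and $\Ec_5$ because $\be_{1,1}(E_2)$ equals $E_1\otimes\C x_1$, resp.\ $\C x_1\otimes E_1$, in which the only symmetric product line is $[x_1\otimes x_1]$; for $\Ec_3(\la)$ because $[x_1\otimes x_1]$ is its only product line at all. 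For $\Ec_2$ the relations for $\be_{1,1}(x_2')=x_1'\otimes y_1'$ and $\be_{1,1}(y_2')=y_1'\otimes x_1'$ place product vectors into $\spn\{x_1\otimes y_1,\ y_1\otimes x_1\}$, whose only product lines are $[x_1\otimes y_1]$ and $[y_1\otimes x_1]$. Combining this with $\|x_1'\|=\|y_1'\|=1$ and with the known value of $\ip{x_1'}{y_1'}$ (equal to $a$ for $\Ec_1,\Ec_2$ and to $0$ for $\Ec_3,\Ec_4,\Ec_5$), a short case check leaves exactly the following possibilities, always with $|\mu|=|\nu|=1$: $(x_1',y_1')$ is $(\mu x_1,\mu y_1)$ or $(\mu y_1,\mu x_1)$ for $\Ec_1(a)$ and $\Ec_2(a)$ with $a\ne0$; it is $(\mu x_1,\nu y_1)$ or $(\mu y_1,\nu x_1)$ for $\Ec_1(0)$ and $\Ec_2(0)$; and it is $(\mu x_1,\nu y_1)$ for $\Ec_3(\la),\Ec_4,\Ec_5$. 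In each case one reads off a composition $\phi$ of a trivial automorphism \eqref{1.6} with the swap \eqref{1.7} in the sub-cases where $x_1'$ is a multiple of $y_1$, and with an automorphism \eqref{1.8}, \eqref{1.9} or \eqref{1.10} (according as the type is $\Ec_1(0)$, $\Ec_2(0)$, or $\Ec_3(\la)/\Ec_4/\Ec_5$) when $\nu\ne\mu$ --- and $\nu=\mu$ is forced precisely when $a\ne0$. This $\phi$ has the form asserted for the type.

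For the last step, the automorphism $\psi:=\phi^{-1}\circ(\theta_t)_t$ satisfies $\psi_1 x_1=x_1$ and $\psi_1 y_1=y_1$, and I claim $\psi_t x_t=x_t$, $\psi_t y_t=y_t$ for all $t$, by induction on $t$. In every type the defining relations express $\be_{1,t}(x_{t+1})$ and $\be_{1,t}(y_{t+1})$ through $x_1,y_1,x_t,y_t$ alone; for instance $\be_{1,t}(y_{t+1})$ is $y_1\otimes y_t$, $y_1\otimes x_t$, $y_1\otimes x_t+\la x_1\otimes y_t$, $y_1\otimes x_t$, $x_1\otimes y_t$ for $\Ec_1,\dots,\Ec_5$ respectively, and similarly for $\be_{1,t}(x_{t+1})$. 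As $(\psi_t x_t,\psi_t y_t)_t$ obeys the same relations, the induction hypothesis gives $\be_{1,t}(\psi_{t+1}x_{t+1})=\be_{1,t}(x_{t+1})$ and $\be_{1,t}(\psi_{t+1}y_{t+1})=\be_{1,t}(y_{t+1})$, hence $\psi_{t+1}x_{t+1}=x_{t+1}$ and $\psi_{t+1}y_{t+1}=y_{t+1}$ since $\be_{1,t}$ is an isometry, hence injective. Therefore $\psi$ is the identity automorphism and $(\theta_t)_t=\phi$.

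The step I expect to be the main obstacle is the product-vector analysis above: identifying correctly, for each of the five types, which lines of $\be_{1,1}(E_2)$ carry (symmetric, for $\Ec_1,\Ec_3,\Ec_4,\Ec_5$) product vectors --- especially for the thin subspaces $E_1\otimes\C x_1$ and $\C x_1\otimes E_1$ of $\Ec_4,\Ec_5$ --- and then checking that the norm and inner-product conditions cut the freedom in $(x_1',y_1')$ down to precisely the parameters $\mu,\nu$ of \eqref{1.6}--\eqref{1.10}, coupled by $\nu=\mu$ exactly when $a\ne0$. The remaining bookkeeping --- matching each sub-case to a composition of listed automorphisms, and the induction of the previous paragraph --- is routine.
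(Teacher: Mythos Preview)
Your argument is correct, and the product-vector analysis you flag as the main obstacle is indeed sound in each case (for $\Ec_4,\Ec_5$ the ``symmetric'' restriction is exactly what is needed, since every vector of $E_1\otimes\C x_1$ is a product vector).

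Your route differs from the paper's. The paper works globally in $t$: for each type it observes (from the shape of $\be_{s,t}(E_{s+t})$ or from the product-vector lemmas in \cite{I}) that $\theta_t x_t=\E^{\I c_t}x_t$ and $\theta_t y_t=\E^{\I\alpha_t}y_t$ for \emph{all} $t$, then feeds the basis relations into the automorphism identity to obtain Cauchy-type functional equations such as $\E^{\I c_{s+t}}=\E^{\I c_s}\E^{\I c_t}$ and $\E^{\I\alpha_{s+t}}=\E^{\I\alpha_s}\E^{\I c t}$, which it solves to read off $\theta_t$ explicitly. You instead localize to $t=1$: the same product-vector analysis, applied only at level $(1,1)$, pins down $\theta_1$; you then match $\theta_1$ to a composition $\phi$ of the listed automorphisms and invoke a clean rigidity step (an automorphism $\psi$ with $\psi_1=\One$ is the identity, by induction through $\be_{1,t}$). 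Your approach is more modular --- it isolates the statement ``an automorphism is determined by $\theta_1$'' as a separate lemma --- and it handles $\Ec_2(a)$ uniformly, which the paper leaves to the reader. The paper's approach, on the other hand, produces the explicit form of $\theta_t$ for every $t$ in one pass, without the need for a separate induction.
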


\begin{proof}
By the definition of automorphism, $ \be_{s,t} ( \theta_{s+t} u ) = (
\theta_s \otimes \theta_t ) ( \be_{s,t}(u) ) $ for all $ u \in E_{s+t} $.

For $ \Ec_4 $ we have $ \be_{s,t} (E_{s+t}) = E_s \otimes x_t $, therefore $
\theta_t x_t = \E^{\I c_t} x_t $ for some $ c_t $. The relation $ \be_{s,t}
(x_{s+t}) = x_s \otimes x_t $ implies $ \E^{\I c_{s+t}} = \E^{\I c_s} \E^{\I
c_t} $, since
\begin{multline*}
\E^{\I c_{s+t}} x_s \otimes x_t = \be_{s,t} \( \E^{\I c_{s+t}} x_{s+t} \) = \\
= \be_{s+t} \( \theta_{s+t} x_{s+t} \) = ( \theta_s \otimes \theta_t ) (
 \be_{s,t}(x_{s+t}) ) = \\
= ( \theta_s \otimes \theta_t ) ( x_s \otimes x_t ) = ( \theta_s x_s ) \otimes
( \theta_t x_t ) = \E^{\I c_s} x_s \otimes \E^{\I c_t} x_t \, ;
\end{multline*}
thus, $ \theta_t x_t = \E^{\I ct} x_t $. On the other hand, $ \theta_t y_t =
\E^{\I \al_t } y_t $ for some $ \al_t $, since $ \theta_t y_t $ is a unit
vector orthogonal to $ \theta_t x_t $. The relation $ \be_{s,t} (y_{s+t}) =
y_s \otimes x_t $ implies $ \E^{\I \al_{s+t}} = \E^{\I\al_s} \E^{\I ct} $;
thus $ \theta_t y_t = \E^{\I b} \E^{\I ct} y_t $ where $ b = \al_1 - c $.

For $ \Ec_5 $ the proof is similar.

For $ \Ec_3(\la) $ we note that all product vectors in $ \be_{s,t} (E_{s+t}) $
are collinear to $ x_s \otimes x_t $ (see \cite[Lemma 3.3]{I}), therefore $
\theta_t x_t = \E^{\I c_t} x_t $ for some $ c_t $. As before, the relation $
\be_{s,t} (x_{s+t}) = x_s \otimes x_t $ implies $ \E^{\I c_{s+t}} = \E^{\I
c_s} \E^{\I c_t} $, thus $ \theta_t x_t = \E^{\I ct} x_t $. On the other hand,
$ \theta_t y_t = \E^{\I \al_t } y_t $ for some $ \al_t $, since $ \theta_t y_t
$ is a vector orthogonal to $ \theta_t x_t $. As before, the relation $
\be_{s,t} (y_{s+t}) = y_s \otimes x_t + \la^s x_s \otimes y_t $ implies
\[
\E^{\I \al_{s+t}} ( y_s \otimes x_t + \la^s x_s \otimes y_t ) = \E^{\I \al_s}
y_s \otimes \E^{\I ct} x_t + \la^s \E^{\I cs} x_s \otimes \E^{\I \al_t} y_t
\, ,
\]
therefore $ \E^{\I \al_{s+t}} = \E^{\I \al_s} \E^{\I ct} $ and $ \theta_t y_t
= \E^{\I b} \E^{\I ct} y_t $.

For $ \Ec_1(a) $ we recall the argument used in the proof of Lemma \ref{1.4}:
two one-dimensional subspaces are singled out; $ \theta_t x_t $ must be either
$ \E^{\I c_t} x_t $ or $ \E^{\I c_t} y_t $, and the same holds for $ \theta_t
y_t $. The relation $ \be_{s,t} (x_{s+t}) = x_s \otimes x_t $ implies $ \E^{\I
c_{s+t}} = \E^{\I c_s} \E^{\I c_t} $ as before, but it also shows that the
choice of $ x $ or $ y $ must conform at $ s $, $ t $ and $ s+t $, which means
that either $ \theta_t x_t = \E^{\I ct} x_t $ for all $ t $, or $ \theta_t
x_t = \E^{\I ct} y_t $ for all $ t $. Accordingly, in the first case $
\theta_t y_t = \E^{\I dt} y_t $ for all $ t $, while in the second case $
\theta_t y_t = \E^{\I dt} x_t $ for all $ t $.

Assume for now that $ a \ne 0 $. The relation $ \ip{ x_t }{ y_t } = a^t $
gives $ \E^{\I ct} \E^{-\I dt} = 1 $ and so, $ d = c $. The first case leads
to the trivial automorphism
\[
\theta_t (x_t) = \E^{\I ct} x_t \, , \quad \theta_t (y_t) = \E^{\I ct} y_t \,
.
\]
The second case leads to
\[
\theta_t (x_t) = \E^{\I ct} y_t \, , \quad \theta_t (y_t) = \E^{\I ct} x_t \,
,
\]
the composition of a trivial automorphism and \eqref{1.7}.

Assume now that $ a = 0 $. The first case leads to
\[
\theta_t (x_t) = \E^{\I ct} x_t \, , \quad \theta_t (y_t) = \E^{\I dt} y_t \,
,
\]
the composition of a trivial automorphism and \eqref{1.8}.
The second case leads to
\[
\theta_t (x_t) = \E^{\I ct} y_t \, , \quad \theta_t (y_t) = \E^{\I dt} x_t \,
,
\]
the composition of a trivial automorphism, \eqref{1.7} and \eqref{1.8}.

For $ \Ec_2(a) $: this case is left to the reader. (It will be excluded in
Sect.~\ref{sec:3}, anyway.)
\end{proof}

\section[Time on a sublattice]
 {\raggedright Time on a sublattice}
\label{sec:2}
Let $ (E_t,\be_{s,t})_{s,t} $ be a subproduct system, and $ m \in
\{1,2,\dots\} $. Restricting ourselves to $ E_m, E_{2m}, \dots $ we get
another subproduct system $ (E_{mt},\be_{ms,mt})_{s,t} $. The type of the new
system is uniquely determined by the type of the original system:
\begin{equation}\label{ooo}
\begin{array}{ccc}
\text{type of } (E_t)_t & & \text{type of } (E_{mt})_t \\
\\
\Ec_1(a) & & \Ec_1 (a^m) \\
\Ec_2(a) & & \begin{cases}
 \Ec_1 (a^m)& \text{if $ m $ is even},\\
 \Ec_2 (a^m)& \text{if $ m $ is odd}
\end{cases} \\
\Ec_3(\la) & & \Ec_3(\la^m) \\
\Ec_4 & & \Ec_4 \\
\Ec_5 & & \Ec_5
\end{array}
\end{equation}
Here and later on I abbreviate $ (E_t,\be_{s,t})_{s,t} $ to $ (E_t)_t $. The
proof of \eqref{ooo} is straightforward: every basis $ b = (x_t,y_t)_t $ of $
(E_t)_t $ leads naturally to a basis $ R_m (b) $ of $ (E_{mt})_t $, namely,
\begin{equation}\label{2.2}
R_m : (x_t,y_t)_t \mapsto (x_{mt},y_{mt})_t
\end{equation}
when $ (E_t)_t $ is of type $ \Ec_1(a) $, $ \Ec_2(a) $, $ \Ec_4 $ or $ \Ec_5
$, and
\begin{equation}\label{2.3}
R_m : (x_t,y_t)_t \mapsto \Big( x_{mt}, \frac1{\|y_m\|} y_{mt} \Big)_t
\end{equation}
when $ (E_t)_t $ is of type $ \Ec_3(\la) $.

Every automorphism $ \Theta = (\theta_t)_t $ of $ (E_t)_t $ leads naturally to
an automorphism $ S_m(\Theta) = (\theta_{mt})_t $ of $ (E_{mt})_t $. Clearly,
\[
R_m \( \Theta (b) \) = (S_m(\Theta)) \( R_m(b) \) \, ,
\]
where automorphisms act naturally on bases:
\[
\Theta (b) = ( \theta_t x_t, \theta_t y_t )_t \quad \text{for } \Theta =
(\theta_t)_t \text{ and } b = (x_t,y_t)_t \, .
\]
Note also that
\begin{equation}\label{2.4}
R_{mn} (b) = R_m ( R_n(b) ) \, ,
\end{equation}
and $ S_{mn} (\Theta) = S_m ( S_n(\Theta) ) $ for all $ m,n $, $ b $ and $
\Theta $.

The map $ S_m $ is a homomorphism from the group of automorphisms of $ (E_t)_t
$ to the group of automorphisms of $ (E_{mt})_t $.

\begin{lemma}
For every $ m $ the homomorphism $ S_m $ is an epimorphism. That is, every
automorphism of $ (E_{mt})_t $ is of the form $ S_m(\Theta) $.
\end{lemma}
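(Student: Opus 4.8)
The plan is to reduce the statement to lifting generators. By the table \eqref{ooo}, the type of $(E_{mt})_t$ is determined by that of $(E_t)_t$, so Lemma~\ref{1.11} applies to both systems; in particular the automorphism group of $(E_{mt})_t$ is generated by its trivial automorphisms together with one or two further generators, taken, according to the type of $(E_{mt})_t$, among the flip \eqref{1.7}, the families \eqref{1.8} and \eqref{1.9}, and the family \eqref{1.10}. Since $S_m$ is a homomorphism (as noted just before the statement), it suffices to exhibit, for each such generator, an automorphism $\Theta$ of $(E_t)_t$ with $S_m(\Theta)$ equal to it.

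The trivial automorphisms cause no trouble: $S_m$ sends $\theta_t=\E^{\I ct}\One_t$ to $\theta_{mt}=\E^{\I(mc)t}\One_{mt}$, the trivial automorphism of $(E_{mt})_t$ with parameter $mc$, and $c\mapsto mc \bmod 2\pi$ is onto $[0,2\pi)$, so the trivial automorphism with parameter $c$ is lifted by the one with parameter $c/m$. For the remaining generators I would use the naturality identity $R_m(\Theta(b))=(S_m(\Theta))(R_m(b))$ recorded in the text, which turns the claim into: every basis of $(E_{mt})_t$ is $R_m(b)$ for some basis $b$ of $(E_t)_t$. If $(E_t)_t$ is of type $\Ec_4$ or $\Ec_5$, the restricted type is the same and $R_m$ is merely passing to the subsequence, so \eqref{1.10} is lifted by \eqref{1.10} with the same parameter. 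If it is $\Ec_3(\la)$, the restricted type is $\Ec_3(\la^m)$ and $R_m$ additionally rescales the $y$-vectors by $1/\|y_m\|$, which commutes with the phase in \eqref{1.10}, so again \eqref{1.10} lifts to \eqref{1.10}. If it is $\Ec_1(a)$ or $\Ec_2(a)$ with $a\ne0$, the restricted type is $\Ec_1(a^m)$ or $\Ec_2(a^m)$ with nonzero parameter, whose only extra generator is the flip \eqref{1.7}, lifted by the flip. If it is $\Ec_1(0)$, the restricted type is $\Ec_1(0)$; the flip lifts to the flip, and \eqref{1.8} with parameter $b$ is lifted by \eqref{1.8} with parameter $b/m$. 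If it is $\Ec_2(0)$ with $m$ odd, the restricted type is $\Ec_2(0)$, and, using that $mt$ is even precisely when $t$ is, one checks that the flip lifts to the flip and \eqref{1.9} with parameter $b$ is lifted by \eqref{1.9} with parameter $b$.

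The one case needing separate thought is $\Ec_2(0)$ with $m$ even, where the restricted system is of type $\Ec_1(0)$, whose automorphism group contains the one-parameter family \eqref{1.8}, acting on the basis $(x_{mt},y_{mt})_t$ by $y_{mt}\mapsto\E^{\I bt}y_{mt}$ and $x_{mt}\mapsto x_{mt}$. By Lemma~\ref{1.11} every automorphism of $\Ec_2(0)$ is a trivial automorphism composed with \eqref{1.9} and possibly the flip, and \eqref{1.9} acts as the identity at every even level; hence $S_m$ scales $x_{mt}$ and $y_{mt}$ by one and the same scalar (up to a common flip), so the naive lifting above does not reach \eqref{1.8} for $b\ne0$. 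This is where I expect the genuine work — or the genuine exception — to lie: one must either produce a subtler preimage, or, consistently with the remark that type $\Ec_2$ is to be excluded in Section~\ref{sec:3} anyway, dispose of type $\Ec_2$ at the outset. Accordingly I would first set type $\Ec_2$ aside and then carry out the routine generator-lifting described above for each of the other types.
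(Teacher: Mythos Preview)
Your approach is the same as the paper's: invoke Lemma~\ref{1.11} to write every automorphism of $(E_{mt})_t$ as a product of the explicit generators \eqref{1.6}--\eqref{1.10}, then lift each generator through $S_m$. The paper compresses the entire case analysis into the sentence ``The check, left to the reader, is straightforward,'' followed by the parenthetical ``(The case $\Ec_2(a)$ will be excluded in Sect.~\ref{sec:3}, anyway.)'' --- so you have in fact carried out more than the paper does.

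Your identification of the obstruction at $\Ec_2(0)$ with $m$ even is correct, and sharper than the paper's dismissal: by Lemma~\ref{1.11} every automorphism of $\Ec_2(0)$ acts on the even levels as a trivial automorphism composed with at most the flip, so the image of $S_m$ misses the one-parameter family \eqref{1.8} of the restricted $\Ec_1(0)$ system whenever $b\ne 0$. Thus the lemma, read literally, fails in this case; the paper simply does not use it there (Corollary~\ref{2.6} is applied in Sect.~\ref{sec:3} only after $\Ec_2$ has already been ruled out). Your proposed handling --- set $\Ec_2$ aside and do the routine lifting for the remaining types --- is exactly what the paper does.
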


\begin{proof}
By Lemma \ref{1.11}, every automorphism of $ (E_{mt})_t $ is the product of
automorphisms written out explicitly in \eqref{1.6}--\eqref{1.10}. It is
sufficient to check that each factor is of the form $ S_m(\Theta) $. The
check, left to the reader, is straightforward. (The case $ \Ec_2(a) $ will be
excluded in Sect.~\ref{sec:3}, anyway.)
\end{proof}

\begin{corollary}\label{2.6}
For every $ m $ the map $ R_m $ is surjective. That is, every basis of $
(E_{mt})_t $ is of the form $ R_m(b) $ where $ b $ is a basis of $ (E_t)_t $.
\end{corollary}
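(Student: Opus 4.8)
The plan is to obtain this as a formal consequence of the preceding lemma (the epimorphism property of $S_m$) together with the bijective correspondence between bases and automorphisms recorded above. First I would fix once and for all a reference basis $b_0$ of $(E_t)_t$ and put $c_0 = R_m(b_0)$; by the constructions \eqref{2.2} and \eqref{2.3} this $c_0$ is a basis of $(E_{mt})_t$.

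Then, given an arbitrary basis $c$ of $(E_{mt})_t$, I would use the bijective correspondence between bases and automorphisms of $(E_{mt})_t$ --- taken relative to the fixed basis $c_0$ --- to write $c = \Psi(c_0)$ for a (unique) automorphism $\Psi$ of $(E_{mt})_t$. By the preceding lemma there is an automorphism $\Theta$ of $(E_t)_t$ with $\Psi = S_m(\Theta)$. Applying the intertwining identity $R_m(\Theta(b)) = (S_m(\Theta))(R_m(b))$ with $b = b_0$ now yields
\[
c = \Psi(c_0) = (S_m(\Theta))(R_m(b_0)) = R_m(\Theta(b_0)) \, ,
\]
and $\Theta(b_0)$ is a basis of $(E_t)_t$, so $c$ is of the required form.

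I do not expect a real obstacle here: the substantive content is the epimorphism property of $S_m$, which the preceding lemma supplies. The only point meriting a moment's attention is the step ``$c = \Psi(c_0)$ for some automorphism $\Psi$'', i.e.\ that the automorphism group acts transitively on the bases of $(E_{mt})_t$; but this is precisely what is meant by the bijective correspondence between bases and automorphisms (any two bases of a subproduct system share the same type, and within a given type they differ by an automorphism, as is visible from the explicit description of bases and automorphisms in Section~\ref{sec:1}).
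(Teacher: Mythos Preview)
Your proof is correct and follows essentially the same route as the paper's own argument: fix a basis $b_0$, use the intertwining relation $R_m(\Theta(b_0)) = (S_m(\Theta))(R_m(b_0))$ together with the surjectivity of $S_m$ from the preceding lemma, and conclude via the transitivity of the automorphism action on bases. The paper compresses this into two sentences, but the content is identical.
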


\begin{proof}
Let $ b_0 $ be a basis of $ (E_t)_t $ and $ \Theta $ run over all
automorphisms of $ (E_t)_t $, then $ S_m(\Theta) $ runs over all automorphisms
of $ (E_{mt})_t $, therefore $ R_m ( \Theta(b_0) ) = (S_m(\Theta)) ( R_m(b_0)
) $ runs over all bases of $ (E_{mt})_t $.
\end{proof}

\section[Rational time]
 {\raggedright Rational time}
\label{sec:3}
In this section the object introduced by Def.~\ref{subproduct_system} will be
called a \emph{discrete-time subproduct system.} A \emph{rational-time
subproduct system} is defined similarly; in this case the variables $ r,s,t $
of Def.~\ref{subproduct_system} run over the set $ \Q_+ = \Q \cap (0,\infty) $
of all positive rational numbers (rather than the set $ \Z_+ = \Z \cap
(0,\infty) $ of all positive integers).

\begin{theorem}\label{3.1}
For every rational-time subproduct system there exist vectors $ x_t, y_t \in
E_t $ for $ t \in \Q_+ $ such that one and only one of the following four
conditions (numbered 1, 3, 4, 5) is satisfied.

(1) There exists $ a \in [0,1) $ such that for all $ s,t \in \Q_+ $
\begin{gather*}
\| x_t \| = \| y_t \| = 1 \, , \quad \ip{ x_t }{ y_t } = a^t \, , \\
\be_{s,t} (x_{s+t}) = x_s \otimes x_t \, , \quad
\be_{s,t} (y_{s+t}) = y_s \otimes y_t \, .
\end{gather*}

(3) There exist $ c \in (0,\infty) $ and a family $ (\eta_t)_{t\in\Q_+} $ of
numbers $ \eta_t \in \C $ such that for all $ s,t \in \Q_+ $ we have $
|\eta_t| = 1 $, $ \eta_{s+t} = \eta_s \eta_t $ and
\begin{gather*}
\| x_t \| = 1 \, , \quad \ip{ x_t }{ y_t } = 0 \, , \\
\| y_t \|^2 =  \begin{cases}
  t &\text{if } c = 1,\\
  (c^{2t}-1) / (c^2-1) &\text{if } c \ne 1;
 \end{cases} \\
\be_{s,t} (x_{s+t}) = x_s \otimes x_t \, , \quad
\be_{s,t} (y_{s+t}) = y_s \otimes x_t + c^s \eta_s x_s \otimes y_t \, .
\end{gather*}

(4) For all $ s,t \in \Q_+ $
\begin{gather*}
\| x_t \| = \| y_t \| = 1 \, , \quad \ip{ x_t }{ y_t } = 0 \, , \\
\be_{s,t} (x_{s+t}) = x_s \otimes x_t \, , \quad
\be_{s,t} (y_{s+t}) = y_s \otimes x_t \, .
\end{gather*}

(5) For all $ s,t \in \Q_+ $
\begin{gather*}
\| x_t \| = \| y_t \| = 1 \, , \quad \ip{ x_t }{ y_t } = 0 \, , \\
\be_{s,t} (x_{s+t}) = x_s \otimes x_t \, , \quad
\be_{s,t} (y_{s+t}) = x_s \otimes y_t \, .
\end{gather*}
\end{theorem}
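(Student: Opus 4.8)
The plan is to reduce the rational-time classification to the discrete-time one (Proposition \ref{1.2}) via a limiting/compatibility argument. For each $ n \in \{1,2,\dots\} $ the restriction of the rational-time system to the sublattice $ \tfrac1{n!}\Z_+ $ is a discrete-time subproduct system (after rescaling time by $ n! $), hence by Proposition \ref{1.2} it has a basis of one of the five types $ \Ec_1(a) $, $ \Ec_2(a) $, $ \Ec_3(\la) $, $ \Ec_4 $, $ \Ec_5 $. First I would show that type $ \Ec_2 $ cannot occur for a rational-time system: if the restriction to $ \tfrac1{n!}\Z_+ $ were of type $ \Ec_2 $, then by the sublattice table \eqref{ooo} its further restriction to $ \tfrac1{n!}\cdot 2\Z_+ = \tfrac1{(n{-}1)!}\Z_+\cdot\tfrac12 $ (an even step) would be of type $ \Ec_1 $ — but that same lattice $ \tfrac1{(n-1)!/2}\Z_+ $, wait, one must be a little careful: the point is that every rational step size $ t_0 $ is a sublattice of a finer one by an even factor, so any $ \Ec_2 $ alternation is contradicted. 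Concretely, if the system restricted to steps of size $ t_0 $ is of type $ \Ec_2 $, restrict further to steps $ 2t_0 $: by \eqref{ooo} this is $ \Ec_1 $; but steps of size $ 2t_0 $ also arise as an \emph{odd} (namely, factor-$1$) sub-restriction relabelled, and consistency of the type under such relabelling forces a contradiction with the $ \Ec_2 $ alternation pattern already present at scale $ t_0 $. Hence only types $ 1,3,4,5 $ survive; this is why the statement lists exactly those four.

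**Gluing the bases.** Next, I would use Corollary \ref{2.6}: every basis of the coarser lattice $ \tfrac1{(n-1)!}\Z_+ $ is of the form $ R_{?}(b) $ for a basis $ b $ of $ \tfrac1{n!}\Z_+ $, where the restriction map $ R $ is the time-subdivision map of Section \ref{sec:2}. Moreover, since the restriction to a sublattice preserves the type (among $ 1,3,4,5 $) and the parameter transforms in the known way ($ a \mapsto a^m $, $ \la \mapsto \la^m $, and types $ 4,5 $ fixed), all the restrictions are of a \emph{single} common type, and for that type I can choose the parameter consistently: in the $ \Ec_1 $ case the numbers $ a_n \in [0,1) $ with $ a_n^{\,n!/(n-1)!} = a_n^{\,n} = a_{n-1} $ determine a unique $ a_t = a^t $ for all rational $ t $ (set $ a = a_1 $, or rather interpolate: $ a_{1/n!} $ is the unique nonnegative $ n! $-th root of $ a_1 $); similarly in the $ \Ec_3 $ case one gets $ c > 0 $ and the cocycle $ \eta_t $ with $ |\eta_t| = 1 $, $ \eta_{s+t} = \eta_s\eta_t $ from the compatible family of complex parameters $ \la_n $ (writing $ \la_n = c_n\eta_n $ in polar form and matching moduli gives $ c $, matching arguments gives $ \eta $, with the twist that $ \eta $ need not come from a single angle since $ \Q_+ $ is not finitely generated — this is exactly why the statement introduces the auxiliary family $ (\eta_t) $ rather than a single parameter $ \la $). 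For types $ \Ec_4, \Ec_5 $ there is no parameter and nothing to match. Using Corollary \ref{2.6} repeatedly (and \eqref{2.4} for compatibility of the nested restriction maps), I would build a single system of vectors $ (x_t,y_t)_{t\in\Q_+} $ whose restriction to each $ \tfrac1{n!}\Z_+ $ is the chosen basis there; since $ \Q_+ = \bigcup_n \tfrac1{n!}\Z_+ $, the defining relations of the appropriate case then hold for \emph{all} $ s,t \in \Q_+ $.

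**The norm and inner-product formulas.** Finally I would verify the metric data. The relations $ \|x_t\| = 1 $, $ \ip{x_t}{y_t} = a^t $ (case 1) or $ \ip{x_t}{y_t} = 0 $ (cases 3,4,5) are immediate from the corresponding discrete-time identities applied on any sublattice containing $ t $. The only genuinely new computation is the norm of $ y_t $ in case 3: from $ \be_{s,t}(y_{s+t}) = y_s\otimes x_t + c^s\eta_s\, x_s\otimes y_t $ and orthogonality of the two summands (which follows from $ \ip{x_t}{y_t} = 0 $ and $ \|x_s\| = \|x_t\| = 1 $) one gets the additive-type recursion $ \|y_{s+t}\|^2 = \|y_s\|^2 + c^{2s}\|y_t\|^2 $; normalizing $ \|y_{t_0}\|^2 $ appropriately at one scale and solving the recursion over the dense set $ \Q_+ $ yields the closed form $ \|y_t\|^2 = t $ if $ c = 1 $ and $ (c^{2t}-1)/(c^2-1) $ if $ c \ne 1 $. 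I expect the main obstacle to be the bookkeeping in the gluing step — ensuring that the bases chosen on successive lattices $ \tfrac1{n!}\Z_+ $ can be made to \emph{agree} on the overlap (not merely be isomorphic), which is precisely what Corollary \ref{2.6} together with the epimorphism property of $ S_m $ is designed to supply, and ruling out type $ \Ec_2 $ cleanly; everything else is a routine passage to the limit over the directed union $ \Q_+ = \bigcup_n \tfrac1{n!}\Z_+ $.
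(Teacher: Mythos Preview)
Your approach is essentially the paper's: restrict to the nested lattices $\tfrac1{n!}\Z_+$, exclude type $\Ec_2$ via the table \eqref{ooo}, use Corollary \ref{2.6} together with \eqref{2.4} to build a compatible family of bases $b_{1/n!}$ with $R_{n+1}(b_{1/(n+1)!}) = b_{1/n!}$, and then glue over $\Q_+ = \bigcup_n \tfrac1{n!}\Z_+$, handling the $\Ec_3$ norms through the recursion $\|y_{s+t}\|^2 = \|y_s\|^2 + c^{2s}\|y_t\|^2$.

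One correction on the $\Ec_2$ step: your ``Concretely'' argument goes the wrong way. Passing from step size $t_0$ to the \emph{coarser} step $2t_0$ indeed turns $\Ec_2$ into $\Ec_1$, but that is not a contradiction, and the ``odd (factor-$1$) sub-restriction relabelled'' remark does not produce one. The clean argument is the one you stated just before it: the lattice with step $t_0$ is the $m=2$ restriction of the finer lattice with step $t_0/2$, and by \eqref{ooo} the image of the restriction map for even $m$ never contains $\Ec_2$; hence no rational-time system can have an $\Ec_2$ restriction at any scale. The paper phrases this as ``$(E_t)_{t\in\Z_+}$ cannot be of type $\Ec_2(a)$ since there is no corresponding type of $(E_{t/2})_{t\in\Z_+}$.''
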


\begin{proof}
For each $ n = 1,2,\dots $ we consider the restriction of $ (E_t)_{t\in Q_+} $
to $ t \in \{ 1/n!, 2/n!, \dots \} $, the discrete-time subproduct system $
(E_{t/n!})_{t\in\Z_+} $. The systems $ (E_{t/(n+1)!})_{t\in\Z_+} $ and $
(E_{t/n!})_{t\in\Z_+} $ are related as described in Sect.~2 (with $ m = n+1
$). The first of such systems, $ (E_t)_{t\in\Z_+} $, can be of type $
\Ec_1(a) $, $ \Ec_3(\la) $, $ \Ec_4 $ or $ \Ec_5 $ but not $ \Ec_2(a) $ (since
there is no corresponding type of $ (E_{t/2})_{t\in\Z_+} $). We postpone the
case $ \Ec_3(\la) $ and assume for now that $ (E_t)_{t\in\Z_+} $ is of type $
\Ec_1(a) $, $ \Ec_4 $ or $ \Ec_5 $. Accordingly, the $n$-th of these systems,
$ (E_{t/n!})_{t\in\Z_+} $, is of type $ \Ec_1(a^{1/n!}) $, $ \Ec_4 $ or $
\Ec_5 $. Using Corollary \ref{2.6} we choose for each $ n=1,2,\dots $ a basis
$ b_{1/n!} $ of $ (E_{t/n!})_{t\in\Z_+} $ such that $ R_{n+1} (b_{1/(n+1)!}) =
b_{1/n!} $ for each $ n $.

Every $ r \in \Q_+ $ is of the form $ {k_n}/n! $ for $ n $ large enough. The
basis $ b_r = R_{k_n} (b_{1/n!}) $ of $ (E_{rt})_{t\in\Z_+} $ does not depend on $
n $ due to \eqref{2.4}. For the same reason,
\[
b_{mr} = R_m (b_r) \quad \text{for all } r \in \Q_+ \text{ and } m=1,2,\dots
\]
We introduce $ x^{(r)}_{rt}, y^{(r)}_{rt} \in E_{rt} $ for $ t \in \Z_+ $ by
\[
b_r = \( x^{(r)}_{rt}, y^{(r)}_{rt} \)_{t\in\Z_+} \, .
\]

Case $ \Ec_4 $: by \eqref{2.2},
\[
\( x^{(mr)}_{mrt}, y^{(mr)}_{mrt} \)_{t\in\Z_+} = b_{mr} = R_m (b_r) =
\( x^{(r)}_{mrt}, y^{(r)}_{mrt} \)_{t\in\Z_+} \, ,
\]
that is, $ x^{(mr)}_{mrt} = x^{(r)}_{mrt} = x_{mrt} $ depends only on $ mrt $;
vectors $ x_t \in E_t $ for $ t \in \Q_+ $ are thus defined. The same holds
for $ y_t $. Clearly, $ \| x_t \| = \| y_t \| = 1 $ and $ \ip{ x_t }{ y_t } =
0 $. The relation $ \be_{s,t} (x_{s+t}) = x_s \otimes x_t $ for $ s,t \in \Q_+
$ follows from the relation
\[
\be_{k/n!,l/n!} \( x^{(1/n!)}_{(k+l)/n!} \) = x^{(1/n!)}_{k/n!} \otimes
x^{(1/n!)}_{l/n!} 
\]
for $ k,l \in \Z_+ $ and $ n $ large enough. The same holds for $ y $.

Case $ \Ec_5 $ is similar.

Case $ \Ec_1(a) $: as before, \eqref{2.2} leads to $ x_t,y_t \in E_t $ for $ t
\in \Q_+ $; also $ \| x_t \| = \| y_t \| = 1 $, and $ \be_{s,t} (x_{s+t}) =
x_s \otimes x_t $, $ \be_{s,t} (y_{s+t}) = y_s \otimes y_t $. The relation $
\ip{ x_t }{ y_t } = a^t $ follows from the relation
\[
\ip{ x_{k/n!}^{(1/n!)} }{ y_{k/n!}^{(1/n!)} } = \( a^{1/n!} \)^k
\]
for the system $ (E_{t/n!})_{t\in\Z_+} $ of type $ \Ec_1(a^{1/n!}) $.

Case $ \Ec_3(\la) $: the system $ (E_{t/n!})_{t\in\Z_+} $ is of type $
\Ec_3(\la_{1/n!}) $ for some $ \la_{1/n!} \in \C \setminus \{0\} $ satisfying
$ \la_{1/n!}^{n!} = \la $ and $ \la_{1/(n+1)!}^{n+1} = \la_{1/n!} $. We define
$ \la_r $ for all $ r \in \Q_+ $ by $ \la_{k/n!} = \la_{1/n!}^k $ and get $
\la_{r+s} = \la_r \la_s $ and $ |\la_s| = |\la|^s $. By \eqref{2.3},
\[
\( x^{(mr)}_{mrt}, y^{(mr)}_{mrt} \)_{t\in\Z_+} = b_{mr} = R_m (b_r) =
\Big( x^{(r)}_{mrt}, \frac1{ \| y^{(r)}_{mr} \| } y^{(r)}_{mrt}
\Big)_{t\in\Z_+} \, ;
\]
taking $ r = \frac1{(n+1)!} $ and $ m = n+1 $ we get for $ t \in \Z_+ $
\[
x^{(1/n!)}_{t/n!} = x^{(1/(n+1)!)}_{t/n!} \, , \quad
y^{(1/n!)}_{t/n!} = \frac1{ \| y^{(1/(n+1)!)}_{1/n!} \| }
y^{(1/(n+1)!)}_{t/n!} \, .
\]
We define $ x_r $ for all $ r \in \Q_+ $ by $ x_{t/n!} = x^{(1/n!)}_{t/n!} $
and get $ \be_{s,t} (x_{s+t}) = x_s \otimes x_t $ as before; however, $ y $
needs more effort. We have
\[
\| y_{1/n!}^{(1/(n+1)!)} \|^2 = \| y_{(n+1)/(n+1)!}^{(1/(n+1)!)} \|^2 = 1 +
c^{2/(n+1)!} + \dots + c^{2n/(n+1)!} \, ,
\]
where $ c = |\la| $.

Sub-case $ c = 1 $: we note that $ \| y_{1/n!}^{(1/(n+1)!)} \|^2 = n+1 $
implies
\[
\frac1{ \sqrt{n!} } y^{(1/n!)}_{t/n!} = \frac1{ \sqrt{(n+1)!} }
y^{(1/(n+1)!)}_{(n+1)t/(n+1)!} \, ,
\]
define $ y_r $ for all $ r \in \Q_+ $ by
\[
y_{t/n!} = \frac1{ \sqrt{n!} } y^{(1/n!)}_{t/n!}
\]
and get for $ t \in \Z_+ $
\[
\| y_{t/n!} \|^2 = \frac1{n!} \| y^{(1/n!)}_{t/n!} \|^2 = \frac{t}{n!} \, ,
\]
that is, $ \| y_t \|^2 = t $ for all $ t \in \Q_+ $. The relation
\[
\be_{k/n!,l/n!} \( y_{(k+l)/n!}^{(1/n!)} \) = y_{k/n!}^{(1/n!)} \otimes
x_{l/n!}^{(1/n!)} + \la_{1/n!}^k x_{k/n!}^{(1/n!)} \otimes y_{l/n!}^{(1/n!)}
\]
implies $ \be_{s,t} (y_{s+t}) = y_s \otimes x_t + \la_s x_s \otimes y_t $. It
remains to define $ \eta_s $ by
\[
\la_s = |\la_s| \eta_s = c^s \eta_s \, .
\]

Sub-case $ c \ne 1 $: only the calculations related to $ \| y_t \| $ must be
reconsidered. We note that
\[
\| y_{1/n!}^{(1/(n+1)!)} \|^2 = \frac{ c^{2/n!} - 1 }{ c^{2/(n+1)!} - 1 }
\]
implies
\[
\sqrt{ c^{2/n!}-1 } y^{(1/n!)}_{t/n!} = \sqrt{ c^{2/(n+1)!}-1 }
y^{(1/(n+1)!)}_{(n+1)t/(n+1)!} \, ,
\]
define $ y_r $ for all $ r \in \Q_+ $ by
\[
y_{t/n!} = \sqrt{ \frac{ c^{2/n!}-1 }{ c^2-1 } } y^{(1/n!)}_{t/n!} \quad
\text{for } t \in \Z_+
\]
and get
\[
\| y_{t/n!} \|^2 = \frac{ c^{2/n!}-1 }{ c^2-1 } \cdot \frac{ c^{2t/n!}-1 }{
c^{2/n!}-1 } = \frac{ c^{2t/n!}-1 }{ c^2-1 } \quad \text{for } t \in \Z_+ \, ,
\]
that is, $ \| y_t \|^2 = (c^{2t}-1)/(c^2-1) $ for all $ t \in \Q_+ $.
\end{proof}

\section[Arveson systems, Liebscher continuity]
 {\raggedright Arveson systems, Liebscher continuity}
\label{sec:4}
An Arveson system, or product system of Hilbert spaces, consists of separable
Hilbert spaces $ \ti E_t $ for $ t \in \R_+ = (0,\infty) $ and unitary
operators
\[
\ti\be_{s,t} : \ti E_{s+t} \to \ti E_s \otimes \ti E_t
\]
for $ s,t \in \R_+ $, satisfying well-known conditions, see
\cite[Def.~3.1.1]{Ar}.

\begin{definition}\label{4.1}
Let $ (E_t,\be_{s,t})_{s,t\in\Q_+} $ be a rational-time subproduct system (as
defined in Sect.~\ref{sec:3}) and $ (\ti E_t,\ti\be_{s,t})_{s,t\in\R_+} $ an
Arveson system. A \emph{representation} of the subproduct system in the
Arveson system consists of linear isometric embeddings
\[
\al_t : E_t \to \ti E_t \quad \text{for } t \in \Q_+
\]
such that the diagram
\[
\xymatrix{
 E_{s+t} \ar[d]_{\al_{s+t}} \ar[r]^{\be_{s,t}} & E_s \otimes E_t
 \ar[d]^{\al_s \otimes \al_t}
\\
 \ti E_{s+t} \ar[r]^{\ti\be_{s,t}} & \ti E_s \otimes \ti E_t
}
\]
is commutative for all $ s,t \in \Q_+ $.
\end{definition}

Denote by $ \xi_{s,t} : E_s \otimes E_t \to E_t \otimes E_s $ the unitary
exchange operator, $ \xi_{s,t} ( f \otimes g ) = g \otimes f $.

\begin{proposition}\label{4.2}
If $ (E_t,\be_{s,t})_{s,t\in\Q_+} $ admits a representation (in some Arveson
system) then for every $ h \in E_1 $ the function
\[
t \mapsto \begin{cases}
 \ip{ \be_{t,1-t}(h) }{ \xi_{1-t,t} \be_{1-t,t} (h) } &\text{for } t \in \Q
  \cap (0,1), \\
 \| h \|^2 &\text{for } t \in \{0,1\}
\end{cases}
\]
is uniformly continuous on $ \Q \cap [0,1] $.
\end{proposition}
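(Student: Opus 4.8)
\emph{Proof (plan).}\quad The plan is to transport the expression into the ambient Arveson system, identify it there as a matrix coefficient of a one-parameter unitary group (a ``rotation of the circle $[0,1]$''), and then use continuity (Liebscher) of that group.

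First I would fix a representation $(\al_t)_{t\in\Q_+}$ of $(E_t,\be_{s,t})$ in an Arveson system $(\ti E_t,\ti\be_{s,t})$ and put $\ti h=\al_1(h)$, so that $\|\ti h\|=\|h\|$. Writing $\ti\xi_{s,t}:\ti E_s\otimes\ti E_t\to\ti E_t\otimes\ti E_s$ for the exchange operators of the Arveson system, one has the naturality identity $(\al_t\otimes\al_s)\circ\xi_{s,t}=\ti\xi_{s,t}\circ(\al_s\otimes\al_t)$. Combining it with the commuting square of Definition~\ref{4.1} and with the fact that $\al_t\otimes\al_{1-t}$ is isometric, I would obtain, for every $t\in\Q\cap(0,1)$,
\[
\ip{\be_{t,1-t}(h)}{\xi_{1-t,t}\be_{1-t,t}(h)}=\ip{\ti\be_{t,1-t}(\ti h)}{\ti\xi_{1-t,t}\ti\be_{1-t,t}(\ti h)}\,.
\]
The right-hand side is defined for every real $t\in(0,1)$; it gives a function $g$ on $(0,1)$, and I set $g(0)=g(1)=\|h\|^2$. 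It then suffices to prove that $g$ is continuous on $[0,1]$: being continuous on a compact interval it is uniformly continuous there, and the function in the proposition is precisely its restriction to $\Q\cap[0,1]$.

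Next I would rewrite $g$ as a matrix coefficient. Since $\ti\be_{t,1-t}$, $\ti\be_{1-t,t}$ and $\ti\xi_{1-t,t}$ are unitary, the operator $V_t:=\ti\be_{t,1-t}^{*}\,\ti\xi_{1-t,t}\,\ti\be_{1-t,t}:\ti E_1\to\ti E_1$ is unitary, and $g(t)=\ip{\ti h}{V_t\ti h}$ for $t\in(0,1)$. In the standard concrete (interval) realization of the Arveson system, $V_t$ is the ``rotation by $t$ of the circle $[0,1]$'': it carries the factor $\ti E_{(0,1-t)}$ of $\ti E_{(0,1)}=\ti E_1$ onto $\ti E_{(t,1)}$ and the factor $\ti E_{(1-t,1)}$ onto $\ti E_{(0,t)}$ by the canonical shifts. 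Hence $(V_t)_{t\in[0,1]}$, extended by $V_0=V_1=\One$, is a one-parameter unitary group of period one, that is, a unitary representation of $\R/\Z$ (the group law $V_sV_t=V_{s+t}$, indices taken in $\R/\Z$, can also be checked directly from associativity of $\ti\be$). The measurable structure of the Arveson system makes $t\mapsto\ip{\eta}{V_t\zeta}$ measurable for all $\eta,\zeta\in\ti E_1$ (expand $\ti\be_{1-t,t}\eta$ and $\ti\be_{t,1-t}\zeta$ over a fundamental family of measurable sections), and a weakly measurable one-parameter unitary group is strongly continuous. Therefore $g(t)=\ip{\ti h}{V_t\ti h}$ is continuous on $\R/\Z$, in particular on $[0,1]$, with $g(0)=g(1)=\ip{\ti h}{\ti h}=\|h\|^2$ --- exactly the continuity of $g$ that was needed.

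The first step is routine bookkeeping (naturality of the exchange operators, the isometry of $\al_t\otimes\al_{1-t}$). The step I expect to be the main obstacle is the continuity of $g$: identifying $(V_t)$ with a genuine one-parameter unitary group of the circle, and verifying from Arveson's measurability axioms that $t\mapsto V_t$ is weakly measurable. This is where ``Liebscher continuity'' enters; after it, von Neumann's theorem (weak measurability implies strong continuity for one-parameter unitary groups) completes the argument. Alternatively one may quote Liebscher's continuity theorem for Arveson systems directly, bypassing the explicit group structure.
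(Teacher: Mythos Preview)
Your proposal is correct and follows essentially the same route as the paper: transport the inner product into the Arveson system via the representation, recognise it as the matrix coefficient $\langle \tilde h, V_t \tilde h\rangle$ of the ``rotation'' unitary $V_t=\ti\be_{t,1-t}^{*}\,\ti\xi_{1-t,t}\,\ti\be_{1-t,t}$ on $\ti E_1$, and invoke Liebscher's continuity. The only cosmetic difference is that the paper quotes Liebscher's theorem \cite[Th.~7.7]{Li} as a black box for the strong continuity of this one-parameter group (your $V_t$ is exactly the paper's $U_t$), whereas you also sketch its ingredients (group law from associativity, weak measurability, von Neumann) before offering the same citation as an alternative.
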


\begin{proof}
By a theorem of Liebscher \cite[Th.~7.7]{Li}, for every Arveson system $ (\ti
E_t,\ti\be_{s,t})_{s,t\in\R_+} $ the unitary operators $ U_t : \ti E_1 \to \ti
E_1 $ defined for $ t \in (0,1) $ by
\[
\ti\be_{t,1-t} ( U_t \ti h ) = \ti\xi_{1-t,t} \ti\be_{1-t,t} (\ti h) \quad
\text{for } \ti h \in \ti E_1
\]
are a strongly continuous one-parameter unitary group (or rather its
restriction to the time interval $ (0,1) $) such that $ U_1 = U_0 = \One $; of
course, $ \ti\xi_{s,t} : \ti E_s \otimes \ti E_t \to \ti E_t \otimes \ti E_s $
is the exchange operator. It follows that for every $ \ti h \in \ti E_1 $ the
function $ t \mapsto \ip{ U_t \ti h }{ \ti h } $ is (uniformly) continuous on
$ [0,1] $. We may rewrite it as
\[
\ip{ U_t \ti h }{ \ti h } = \ip{ \ti\be_{t,1-t} (U_t \ti h) }{ \ti\be_{t,1-t}
(\ti h) } = \ip{ \ti\xi_{1-t,t} \ti\be_{1-t,t} (\ti h) }{ \ti\be_{t,1-t}(\ti
h) } \, .
\]

Given a representation $ (\al_t)_t $ and a vector $ h \in E_1 $, we take $ \ti
h = \al_1(h) \in \ti E_1 $ and get
\begin{gather*}
\ti\be_{t,1-t}(\ti h) = ( \al_t \otimes \al_{1-t} ) ( \be_{t,1-t}(h) ) \, ; \\
\ti\xi_{1-t,t} \ti\be_{1-t,t} (\ti h) = ( \al_t \otimes \al_{1-t} )
 \xi_{1-t,t} \be_{1-t,t} (h) \, ;
\end{gather*}
thus,
\[
\ip{ U_t \ti h }{ \ti h } = \ip{ \xi_{1-t,t} \be_{1-t,t} (h) }{ \be_{t,1-t}(h)
}
\]
for all $ t \in \Q \cap (0,1) $. Also, $ \ip{ U_1 \ti h }{ \ti h } = \ip{ U_0
\ti h }{ \ti h } = \| \ti h \|^2 = \| h \|^2 $.
\end{proof}

\begin{corollary}\label{4.3}
A rational-time subproduct system satisfying Condition (4) of Theorem \ref{3.1}
admits no representation. The same holds for Condition (5).
\end{corollary}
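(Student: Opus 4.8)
The plan is to derive a contradiction from Proposition~\ref{4.2}. Suppose the subproduct system satisfies Condition~(4) of Theorem~\ref{3.1} and admits a representation. Then, by Proposition~\ref{4.2}, for \emph{every} $h \in E_1$ the function
\[
t \mapsto \ip{ \be_{t,1-t}(h) }{ \xi_{1-t,t} \be_{1-t,t}(h) } \quad (t \in \Q \cap (0,1)), \qquad t \mapsto \|h\|^2 \quad (t \in \{0,1\}),
\]
is uniformly continuous on $\Q \cap [0,1]$. It therefore suffices to exhibit a single vector $h \in E_1$ for which this function fails to be even continuous.

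Take $h = y_1$, where $(x_t,y_t)_{t\in\Q_+}$ is a basis as in Condition~(4). For $t \in \Q \cap (0,1)$ the defining formulas give $\be_{t,1-t}(y_1) = y_t \otimes x_{1-t}$ and $\be_{1-t,t}(y_1) = y_{1-t} \otimes x_t$, hence $\xi_{1-t,t}\, \be_{1-t,t}(y_1) = x_t \otimes y_{1-t}$. Consequently
\[
\ip{ \be_{t,1-t}(y_1) }{ \xi_{1-t,t} \be_{1-t,t}(y_1) } = \ip{ y_t }{ x_t }\,\ip{ x_{1-t} }{ y_{1-t} } = 0 ,
\]
since $\ip{x_s}{y_s} = 0$ for all $s$. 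Thus the function is identically $0$ on the dense set $\Q \cap (0,1)$, while at $t \in \{0,1\}$ it equals $\|y_1\|^2 = 1$; such a function is discontinuous at $0$, a fortiori not uniformly continuous on $\Q \cap [0,1]$. This contradicts Proposition~\ref{4.2}, so no representation exists.

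Condition~(5) is handled by the same argument with $h = y_1$: now $\be_{t,1-t}(y_1) = x_t \otimes y_{1-t}$ and $\be_{1-t,t}(y_1) = x_{1-t} \otimes y_t$, so $\xi_{1-t,t}\, \be_{1-t,t}(y_1) = y_t \otimes x_{1-t}$ and $\ip{ \be_{t,1-t}(y_1) }{ \xi_{1-t,t} \be_{1-t,t}(y_1) } = \ip{x_t}{y_t}\,\ip{y_{1-t}}{x_{1-t}} = 0$ on $\Q \cap (0,1)$, whereas the endpoint value is again $1$; Proposition~\ref{4.2} is violated once more. There is essentially no obstacle here: the whole proof is an immediate application of Liebscher continuity (packaged as Proposition~\ref{4.2}) together with a one-line evaluation of the relevant inner product on the distinguished basis. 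The only points needing (routine) care are keeping track of which tensor slot carries which basis vector after the exchange $\xi_{1-t,t}$ has been applied, and the elementary observation that a function which is constant equal to $0$ on $\Q \cap (0,1)$ but takes the value $1$ at an endpoint cannot be uniformly continuous.
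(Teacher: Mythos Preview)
Your proof is correct and follows essentially the same approach as the paper: apply Proposition~\ref{4.2} to $h=y_1$, compute that the inner product vanishes on $\Q\cap(0,1)$ while the endpoint value is $1$, and conclude. The only difference is that you spell out Condition~(5) explicitly whereas the paper dismisses it with ``treated similarly''.
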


\begin{proof}
We take $ h = y_1 $ and get $ \| h \|^2 = 1 $ but
\[
\ip{ \be_{t,1-t}(h) }{ \xi_{1-t,t} \be_{1-t,t} (h) } = \ip{ y_t \otimes
x_{1-t} }{ x_t \otimes y_{1-t} } = 0
\]
for all $ t \in \Q \cap (0,1) $. Condition (5) is treated similarly.
\end{proof}

\begin{lemma}\label{4.4}
If a rational-time subproduct system satisfying Condition (3) of Theorem
\ref{3.1} admits a representation then there exists $ b \in \R $ such that $
\eta_t = \E^{\I bt} $ for all $ t \in \Q_+ $.
\end{lemma}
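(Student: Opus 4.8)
The plan is to apply Proposition \ref{4.2} to the vector $ h = y_1 \in E_1 $, compute the resulting function explicitly, and use its continuity near $ t = 0 $ to deduce $ \eta_t \to 1 $ as $ t \to 0 $; the relation $ \eta_{s+t} = \eta_s \eta_t $ then upgrades this to $ \eta_t = \E^{\I bt} $.

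First I would unfold the two vectors appearing in Proposition \ref{4.2}. From the formula $ \be_{s,t}(y_{s+t}) = y_s \otimes x_t + c^s \eta_s\, x_s \otimes y_t $ of Condition (3) of Theorem \ref{3.1},
\[
\be_{t,1-t}(y_1) = y_t \otimes x_{1-t} + c^t \eta_t\, x_t \otimes y_{1-t}, \qquad
\be_{1-t,t}(y_1) = y_{1-t} \otimes x_t + c^{1-t} \eta_{1-t}\, x_{1-t} \otimes y_t,
\]
so $ \xi_{1-t,t} \be_{1-t,t}(y_1) = x_t \otimes y_{1-t} + c^{1-t} \eta_{1-t}\, y_t \otimes x_{1-t} $. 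Expanding the inner product of these two vectors in $ E_t \otimes E_{1-t} $: the two cross terms vanish because $ \ip{x_s}{y_s} = 0 $, and using $ \|x_s\| = 1 $ the remaining two terms give
\[
g(t) := \ip{ \be_{t,1-t}(y_1) }{ \xi_{1-t,t} \be_{1-t,t}(y_1) } = c^{1-t} \overline{\eta_{1-t}}\, \|y_t\|^2 + c^t \eta_t\, \|y_{1-t}\|^2
\]
for $ t \in \Q \cap (0,1) $ (the placement of the complex conjugate depends on the inner-product convention and is immaterial below). Since $ \|y_1\| = 1 $, Proposition \ref{4.2} says that the function on $ \Q \cap [0,1] $ equal to $ g(t) $ for $ t \in \Q \cap (0,1) $ and to $ 1 $ at $ t \in \{0,1\} $ is uniformly continuous; in particular $ g(t) \to 1 $ as $ \Q_+ \ni t \to 0 $.

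Next I would substitute the explicit value of $ \|y_t\|^2 $. If $ c = 1 $ then $ g(t) = t\, \overline{\eta_{1-t}} + (1-t) \eta_t $, and since $ |t\, \overline{\eta_{1-t}}| = t \to 0 $ we get $ (1-t) \eta_t \to 1 $, hence $ \eta_t \to 1 $. If $ c \ne 1 $ then
\[
g(t) = c^{1-t} \overline{\eta_{1-t}}\, \frac{c^{2t}-1}{c^2-1} + c^t \eta_t\, \frac{c^{2(1-t)}-1}{c^2-1},
\]
where the first summand tends to $ 0 $ (because $ c^{2t} - 1 \to 0 $ while $ |\overline{\eta_{1-t}}| = 1 $) and the coefficient $ c^t (c^{2(1-t)}-1)/(c^2-1) $ of $ \eta_t $ tends to $ 1 $, so again $ \eta_t \to 1 $ as $ t \to 0 $.

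Finally, the homomorphism property $ \eta_{s+t} = \eta_s \eta_t $ together with $ \eta_t \to 1 $ as $ \Q_+ \ni t \to 0 $ makes $ \eta $ uniformly continuous on $ \Q_+ $ — for $ s > t $ one has $ |\eta_s - \eta_t| = |\eta_{s-t} - 1| $ — hence $ \eta $ extends to a continuous homomorphism of $ (\R,+) $ into the unit circle, which is necessarily of the form $ t \mapsto \E^{\I bt} $ for some $ b \in \R $; restricting to $ \Q_+ $ gives $ \eta_t = \E^{\I bt} $ for all $ t \in \Q_+ $. I do not expect a serious obstacle here: the one point requiring care is the tensor-product inner-product bookkeeping that isolates the two surviving terms of $ g(t) $, together with the elementary observation that the term carrying the factor $ \|y_t\|^2 \to 0 $ drops out in the limit, so that the known value $ g(0) = 1 $ pins down $ \lim_{t\to 0} \eta_t $.
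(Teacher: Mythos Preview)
Your proposal is correct and follows essentially the same route as the paper: apply Proposition~\ref{4.2} to $h=y_1$, compute the inner product to obtain $g(t)=c^{1-t}\overline{\eta_{1-t}}\,\|y_t\|^2+c^t\eta_t\,\|y_{1-t}\|^2$, and use $g(t)\to\|y_1\|^2=1$ together with $\|y_t\|^2\to 0$ to force $\eta_t\to 1$. The paper factors this as $\eta_t\bigl(c^t\|y_{1-t}\|^2+c^{1-t}\overline{\eta_1}\,\|y_t\|^2\bigr)$ via $\overline{\eta_{1-t}}=\eta_t\overline{\eta_1}$ rather than splitting into the cases $c=1$ and $c\ne 1$, and it leaves the passage from $\eta_t\to 1$ to $\eta_t=\E^{\I bt}$ implicit, but these are cosmetic differences.
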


\begin{proof}
It is sufficient to prove that $ \eta_t \to 1 $ as $ t \to 0 $, $ t \in \Q_+
$. We apply Prop.~\ref{4.2} to $ h = y_1 $ and get $ \| h \|^2 = 1 $ but
\begin{multline*}
\ip{ \be_{t,1-t}(h) }{ \xi_{1-t,t} \be_{1-t,t} (h) } = \\
= \ip{ y_t \otimes x_{1-t} + c^t \eta_t x_t \otimes y_{1-t} }{ x_t \otimes
 y_{1-t} + c^{1-t} \eta_{1-t} y_t \otimes x_{1-t} } = \\
= c^t \eta_t \| x_t \otimes y_{1-t} \|^2 + \overline{ c^{1-t} \eta_{1-t} } \|
 y_t \otimes x_{1-t} \|^2 = \\
= \eta_t ( c^t \| y_{1-t} \|^2 + c^{1-t} \overline\eta_1 \| y_t \|^2 ) \, ,
\end{multline*}
since $ \eta_t \eta_{1-t} = \eta_1 $ and $ \overline{ \eta_{1-t} } = 1 /
\eta_{1-t} $. It remains to note that $ c^t \| y_{1-t} \|^2 + c^{1-t}
\overline\eta_1 \| y_t \|^2 \to 1 $ as $ t \to 0 $, $ t \in \Q_+ $, since $ \|
y_t \|^2 \to 0 $.
\end{proof}

\begin{lemma}\label{4.45}
If a rational-time subproduct system satisfying Condition (1) of Theorem
\ref{3.1} admits a representation then $ a \ne 0 $.
\end{lemma}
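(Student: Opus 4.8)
The plan is to argue by contradiction, applying Proposition \ref{4.2} to the vector $ h = y_1 $, exactly as in the proofs of Corollary \ref{4.3} and Lemma \ref{4.4}. Suppose $ a = 0 $. Then for Condition (1) we have $ \ip{x_t}{y_t} = 0 $ for all $ t \in \Q_+ $, and $ \be_{t,1-t}(y_1) = y_t \otimes y_{1-t} $, which is already symmetric under the exchange map: $ \xi_{1-t,t}\be_{1-t,t}(y_1) = \xi_{1-t,t}(y_{1-t}\otimes y_t) = y_t \otimes y_{1-t} $. Hence the Liebscher function is
\[
\ip{ \be_{t,1-t}(h) }{ \xi_{1-t,t}\be_{1-t,t}(h) } = \ip{y_t \otimes y_{1-t}}{y_t \otimes y_{1-t}} = \|y_t\|^2 \|y_{1-t}\|^2 = 1
\]
for all $ t \in \Q \cap (0,1) $, and also equals $ \|h\|^2 = 1 $ at $ t \in \{0,1\} $. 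So in the case $ a=0 $ the function is identically $ 1 $, and Proposition \ref{4.2} gives no contradiction. This tells me that the naive approach with $ h = y_1 $ alone is \emph{not} enough, which signals that the real obstruction must be extracted from a different test vector or a different value of $ t $-interval — say by looking at $ E_T $ for $ T > 1 $, or by combining $ x_1 $ and $ y_1 $.

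So instead I would test Proposition \ref{4.2} with $ h = x_1 + y_1 $ (or more generally $ h = x_1 + \mu y_1 $), still in the case $ a = 0 $. Using $ \be_{t,1-t}(x_1) = x_t \otimes x_{1-t} $, $ \be_{t,1-t}(y_1) = y_t \otimes y_{1-t} $, and the orthogonality $ \ip{x_t}{y_t} = 0 $, I compute
\begin{multline*}
\ip{ \be_{t,1-t}(h) }{ \xi_{1-t,t}\be_{1-t,t}(h) } = \\
\ip{x_t\otimes x_{1-t} + y_t \otimes y_{1-t}}{\,x_t\otimes x_{1-t} + y_t\otimes y_{1-t}} = \|x_t\|^2\|x_{1-t}\|^2 + \|y_t\|^2\|y_{1-t}\|^2 = 2,
\end{multline*}
since the cross terms $ \ip{x_t}{y_t}\ip{x_{1-t}}{y_{1-t}} $ vanish, while at the endpoints the value is $ \|h\|^2 = 2 $. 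Again constant — so this still does not do it. The lesson is that within a single fibre $ E_1 $ every product vector stays "aligned" with itself under the flip, and one needs genuinely non-product vectors in a \emph{larger} fibre to see anything. Concretely, I would pass to the restricted discrete system on $ \{1/n!, 2/n!, \dots\} $, or better, work inside $ E_1 $ but track how $ \be_{s,t} $ interacts across three factors: pick $ h \in E_1 $, and for $ t $ near $ 0 $ expand $ \be_{t,1-t}(h) $ using associativity to split $ E_{1-t} $ further. The point where a contradiction should emerge is that when $ a=0 $, the subproduct system $ \Ec_1(0) $ has the property that its representation would force the Arveson system to split as a tensor product in a way incompatible with $ U_t $ being a \emph{continuous} group with $ U_0 = U_1 = \One $ — unless one reexamines which vectors genuinely fail to be fixed.

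The cleaner route, which I would pursue as the main line, is this: apply Proposition \ref{4.2} not on $ [0,1] $ but observe that a representation of the rational-time system restricts, for every $ T \in \Q_+ $, to a representation of the rescaled system on $ \Q \cap [0,T] $; equivalently apply the proposition to the system $ (E_{Tt})_t $. By \eqref{ooo}-type scaling, if $ (E_t)_t $ is of type $ \Ec_1(0) $ then so is $ (E_{Tt})_t $ for every $ T $ — the parameter stays $ 0^T = 0 $. Now take $ h = x_T + y_T \in E_T $ and compute the Liebscher function for the rescaled system at, say, $ t = 1/2 $: $ \be_{T/2,T/2}(x_T) = x_{T/2}\otimes x_{T/2} $ and $ \be_{T/2,T/2}(y_T) = y_{T/2}\otimes y_{T/2} $, so the inner product against its own flip is $ \|x_{T/2}\|^4 + \|y_{T/2}\|^4 = 2 $, matching the endpoint value $ \|h\|^2 = 2 $; still no contradiction. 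At this point I am fairly convinced the actual argument must go through a \emph{genuinely entangled} vector — one not of the form $ \al x_T \otimes \cdots + \be y_T \otimes \cdots $ componentwise — and the natural candidate is to take $ h \in E_2 $ of the form $ h = \be_{1,1}^{\perp}$-complement, i.e. a vector in $ E_2 $ whose image $ \be_{1,1}(h) = x_1\otimes y_1 $ or $ x_1 \otimes y_1 + y_1 \otimes x_1 $. For $ \Ec_1(0) $, is $ x_1 \otimes y_1 $ in the range of $ \be_{1,1} $? By \cite[Lemma 3.2]{I} the product vectors in $ \be_{1,1}(E_2) $ are only multiples of $ x_1\otimes x_1 $ and $ y_1 \otimes y_1 $, so $ x_1 \otimes y_1 \notin \be_{1,1}(E_2) $, which is exactly the rigidity we need: a representation would force $ \ti\be_{1,1}(\ti E_2) $ to contain a 2-dimensional space whose only product vectors are $ \ti x_1 \otimes \ti x_1 $ and $ \ti y_1 \otimes \ti y_1 $ with $ \ti x_1 \perp \ti y_1 $; but then the Liebscher group $ U_t $ on $ \ti E_1 $ must preserve the (discrete, two-point) set of lines $ \{\C \ti x_1, \C \ti y_1\} $ for all rational $ t \in (0,1) $ — indeed $ \ti\be_{t,1-t}(\ti E_1) $ must also have only those two product lines — hence by continuity $ U_t $ fixes each line for all $ t $, so $ U_t = \operatorname{diag}(\E^{\I p t}, \E^{\I q t}) $ in the $ (\ti x_1, \ti y_1) $ basis; but $ U_t $ conjugated appropriately must also send the \emph{non}-product vector structure of $ E_2 $ consistently, and computing $ \ip{U_t \ti y_1}{\ti y_1} = \E^{\I q t} $ against the subproduct data forces, via $ \be_{t,1-t}(y_1) = y_t\otimes y_{1-t} $, that $ q $ is real and the group is continuous — still consistent. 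The hard part, and the real obstacle, is thus pinning down \emph{which} functional of the subproduct data Liebscher continuity constrains so as to rule out $ a = 0 $ while allowing $ a \ne 0 $; I expect the resolution is that for $ a \ne 0 $ the two singled-out lines $ \C x_1 $, $ \C y_1 $ are \emph{non-orthogonal}, so $ U_t $ rotating between a continuum of their "interpolants" is possible, whereas for $ a = 0 $ orthogonality plus the continuity of $ t \mapsto \ip{U_t\ti h}{\ti h} $ for a cleverly chosen $ \ti h $ (likely $ \ti h = \al_2(h) $ with $ \be_{1,1}(h) = x_1\otimes x_1 + y_1 \otimes y_1 $, which is flip-invariant, hence gives value $ 2 $ constantly — no) ... honestly the decisive computation is the one the author will supply; my best guess for the mechanism is that in the $ a = 0 $ case the representation would make $ \ti E_1 $ contain orthogonal $ \ti x_1, \ti y_1 $ with $ \ti x_t := $ the canonical product-vector continuation, and then $ \ip{\ti x_1}{\ti y_1} = 0 $ for all scales forces $ U_t $ to be $ \One $ identically on a subspace it should rotate, contradicting the existence of the $ \Ec_1 $-structure at irrational-to-rational interpolation. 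I would therefore structure the proof as: (i) assume $ a=0 $; (ii) by \cite[Lemma 3.2]{I} the lines $ \C x_1, \C y_1 \subset E_1 $ are intrinsic and orthogonal; (iii) transport to $ \ti E_1 $ and use Liebscher to show $ U_t $ preserves each line; (iv) derive from $ \|y_t\| = 1 $ and $ \ip{x_t}{y_t} = 0 $ across all rational scales that the representation collapses — the contradiction — concluding $ a \ne 0 $.
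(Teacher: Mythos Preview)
Your proposal is not a proof; it is an exploration that correctly diagnoses why the naive approach fails but never lands on the decisive idea. You are right that for $a=0$ every vector $h\in E_1$ yields a constant Liebscher function, so Proposition~\ref{4.2} as stated (with $h\in E_1$) cannot produce a contradiction. You also correctly sense that a ``mixed'' vector like $x_1\otimes y_1$ is what one wants, and correctly note that it does not lie in $\be_{1,1}(E_2)$. But that observation is the hint, not the obstacle.

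The missing step is to work in the ambient Arveson system rather than in the subproduct system. In an Arveson system $\ti\be_{0.5,0.5}$ is \emph{unitary}, not merely isometric, so one can \emph{define}
\[
\ti h \;=\; \ti\be_{0.5,0.5}^{-1}\bigl(\al_{0.5}(x_{0.5})\otimes\al_{0.5}(y_{0.5})\bigr)\in\ti E_1.
\]
This $\ti h$ is not in $\al_1(E_1)$, but Liebscher's theorem (the continuity of $t\mapsto\ip{U_t\ti h}{\ti h}$, which is what underlies Proposition~\ref{4.2}) applies to every vector of $\ti E_1$. A short associativity computation then gives $\ip{U_t\ti h}{\ti h}=a^{2t}$ for $t\in\Q\cap(0,\tfrac12)$, while $\|\ti h\|^2=1$; if $a=0$ the function is identically $0$ on $(0,\tfrac12)$ but equals $1$ at $t=0$, contradicting continuity. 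That is the paper's argument in full.

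Your fallback plan (steps (ii)--(iv)) of showing that $U_t$ must preserve the two orthogonal lines $\C\ti x_1,\C\ti y_1$ would not, by itself, reach a contradiction: a diagonal one-parameter unitary group $U_t=\operatorname{diag}(\E^{\I pt},\E^{\I qt})$ is perfectly continuous with $U_0=\One$, and nothing in the $\Ec_1(0)$ relations on $\al_1(E_1)$ forbids it. The contradiction only appears once you test $U_t$ against a vector coming from the mixed tensor $\ti x_{0.5}\otimes\ti y_{0.5}$, which lives outside $\al_1(E_1)$.
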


\begin{proof}
We define $ \ti h \in \ti E_1 $ by
\[
\ti\be_{0.5,0.5} (\ti h) = \al_{0.5} (x_{0.5}) \otimes \al_{0.5} (y_{0.5})
\]
and observe that $ \| \ti h \| = 1 $ but $ \ip{ U_t \ti h }{ \ti h } = a^{2t}
$ for $ t \in \Q \cap (0,0.5) $.
\end{proof}

\begin{theorem}\label{4.5}
For every rational-time subproduct system admitting a representation (in some
Arveson system) there exist vectors $ x_t, y_t \in E_t $ for $ t \in \Q_+ $
such that one and only one of the following two conditions (numbered 1, 3) is
satisfied.

(1) There exists $ a \in (0,1) $ such that for all $ s,t \in \Q_+ $
\begin{gather*}
\| x_t \| = \| y_t \| = 1 \, , \quad \ip{ x_t }{ y_t } = a^t \, , \\
\be_{s,t} (x_{s+t}) = x_s \otimes x_t \, , \quad
\be_{s,t} (y_{s+t}) = y_s \otimes y_t \, .
\end{gather*}

(3) There exist $ c \in (0,\infty) $ and $ b \in \R $ such that for all $ s,t
\in \Q_+ $
\begin{gather*}
\| x_t \| = 1 \, , \quad \ip{ x_t }{ y_t } = 0 \, , \\
\| y_t \|^2 =  \begin{cases}
  t &\text{if } c = 1,\\
  (c^{2t}-1) / (c^2-1) &\text{if } c \ne 1;
 \end{cases} \\
\be_{s,t} (x_{s+t}) = x_s \otimes x_t \, , \quad
\be_{s,t} (y_{s+t}) = y_s \otimes x_t + c^s \E^{\I bs} x_s \otimes y_t \, .
\end{gather*}
\end{theorem}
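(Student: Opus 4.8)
The plan is to assemble Theorem~\ref{3.1} with Corollary~\ref{4.3} and Lemmas~\ref{4.4} and~\ref{4.45}; essentially all the content already lives in those statements, and what remains is bookkeeping.

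First I would apply Theorem~\ref{3.1} to the given system, obtaining vectors $x_t,y_t\in E_t$ for $t\in\Q_+$ for which exactly one of Conditions (1), (3), (4), (5) of that theorem holds. Since by hypothesis the system admits a representation in some Arveson system, Corollary~\ref{4.3} rules out Conditions (4) and (5) at once, so either Condition (1) or Condition (3) of Theorem~\ref{3.1} is in force.

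Suppose Condition (1) of Theorem~\ref{3.1} holds, with parameter $a\in[0,1)$. Lemma~\ref{4.45} excludes $a=0$, hence $a\in(0,1)$, and the very same vectors satisfy the relations of Condition (1) of the present theorem verbatim. Suppose instead Condition (3) of Theorem~\ref{3.1} holds, with constant $c\in(0,\infty)$ and a multiplicative unimodular family $(\eta_t)_{t\in\Q_+}$. Lemma~\ref{4.4} yields $b\in\R$ with $\eta_t=\E^{\I bt}$ for all $t\in\Q_+$; substituting this into $\be_{s,t}(y_{s+t})=y_s\otimes x_t+c^s\eta_s\,x_s\otimes y_t$ turns it into $\be_{s,t}(y_{s+t})=y_s\otimes x_t+c^s\E^{\I bs}\,x_s\otimes y_t$, while the norm and orthogonality relations are untouched, so with the same $x_t,y_t$ we land exactly in Condition (3) of the present theorem.

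It remains to record the exclusivity. Conditions (1) and (3) cannot both hold for one choice of vectors, since $\ip{x_t}{y_t}=a^t\ne0$ in (1) whereas $\ip{x_t}{y_t}=0$ in (3); this is all that ``one and only one'' demands. (If one wants the parameters pinned down as well: $a=\ip{x_1}{y_1}$ in case (1); in case (3) the number $c$ is forced by $\|y_t\|^2$, and $b$ is forced by $\E^{\I bt}=\eta_t$, since evaluating at $t=1/n$ for large $n$ makes the difference of two candidate exponents, being in $2\pi\Z$ and arbitrarily small, vanish.) I do not expect any genuine obstacle here: the hard analytic input — Liebscher continuity through Proposition~\ref{4.2}, and hence the exclusion of $\Ec_4$, $\Ec_5$ and the rigidity $\eta_t=\E^{\I bt}$, together with $a\ne0$ — has already been spent in Corollary~\ref{4.3} and Lemmas~\ref{4.4}, \ref{4.45}; the present proof is pure assembly, and the only point to watch is that the vectors $x_t,y_t$ supplied by Theorem~\ref{3.1} can be kept unchanged throughout, with no re-normalization or basis change.
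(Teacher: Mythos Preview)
Your proposal is correct and follows exactly the paper's approach: the paper's proof is the single sentence ``Combine Theorem \ref{3.1}, Corollary \ref{4.3}, Lemma \ref{4.4} and Lemma \ref{4.45},'' and you have simply spelled out that combination with care. The extra remarks on uniqueness of the parameters are correct and go slightly beyond what the paper records, but are harmless.
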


\begin{proof}
Combine Theorem \ref{3.1}, Corollary \ref{4.3}, Lemma \ref{4.4} and Lemma
\ref{4.45}.
\end{proof}

\begin{lemma}\label{4.6}
Every rational-time subproduct system satisfying Condition (1) of Theorem
\ref{4.5} admits a representation in an Arveson system of type $ I_1 $.
\end{lemma}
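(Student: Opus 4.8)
The plan is to realise the subproduct system inside the symmetric Fock space Arveson system over a one-dimensional coefficient space, the canonical model of type $ I_1 $, using coherent (normalised exponential) vectors. So I would take $ \ti E_t = \Ga\(L^2((0,t),\C)\) $, the symmetric Fock space, with the usual product $ \ti\be_{s,t} : \ti E_{s+t} \to \ti E_s \otimes \ti E_t $ coming from the orthogonal splitting of $ L^2 $ of an interval of length $ s+t $ into a part of length $ s $ and a part of length $ t $ (the latter translated back to $ (0,t) $); on exponential vectors it sends $ \exp(f) \mapsto \exp(f') \otimes \exp(f'') $, where $ f' $, $ f'' $ are the two parts of $ f $. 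Recall $ \ip{\exp(f)}{\exp(g)} = \E^{\ip fg} $, so the coherent vectors $ \psi(f) = \E^{-\|f\|^2/2}\exp(f) $ are unit vectors and $ \ti\be_{s,t} : \psi(f) \mapsto \psi(f') \otimes \psi(f'') $. This Arveson system is of type $ I_1 $: its units are exactly the exponential vectors, see \cite{Ar}.

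By Condition (1) of Theorem \ref{4.5} we have $ a \in (0,1) $, so $ c := \sqrt{2\ln(1/a)} $ is a well-defined positive number. For $ t \in \Q_+ $ I would put $ g_t = c\,\One_{(0,t)} \in L^2((0,t),\C) $, so that $ \|g_t\|^2 = c^2 t = 2t\ln(1/a) $, and define $ \al_t : E_t \to \ti E_t $ to be the linear map with
\[
\al_t(x_t) = \Om_t \, , \qquad \al_t(y_t) = \psi(g_t) \, ,
\]
$ \Om_t $ being the vacuum of $ \ti E_t $. This $ \al_t $ is isometric: $ \|\Om_t\| = \|\psi(g_t)\| = 1 = \|x_t\| = \|y_t\| $ and $ \ip{\Om_t}{\psi(g_t)} = \E^{-\|g_t\|^2/2} = \E^{-t\ln(1/a)} = a^t = \ip{x_t}{y_t} $, so the Gram matrix of $ (\Om_t,\psi(g_t)) $ equals that of the basis $ (x_t,y_t) $, and a linear map between two systems of vectors with equal Gram matrices preserves the inner product.

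Finally I would check the square of Def.~\ref{4.1} on the basis $ x_{s+t}, y_{s+t} $ of $ E_{s+t} $. On $ x_{s+t} $ both $ \ti\be_{s,t}(\al_{s+t} x_{s+t}) $ and $ (\al_s\otimes\al_t)(\be_{s,t} x_{s+t}) $ equal $ \Om_s \otimes \Om_t $, since the vacuum goes to the vacuum tensor the vacuum and $ \be_{s,t}(x_{s+t}) = x_s \otimes x_t $. On $ y_{s+t} $: the two parts of $ g_{s+t} = c\,\One $, of lengths $ s $ and $ t $, are after translation exactly $ g_s $ and $ g_t $, and $ \|g_{s+t}\|^2 = \|g_s\|^2 + \|g_t\|^2 $; hence $ \ti\be_{s,t}(\al_{s+t} y_{s+t}) = \ti\be_{s,t}(\psi(g_{s+t})) = \psi(g_s) \otimes \psi(g_t) = (\al_s\otimes\al_t)(y_s\otimes y_t) = (\al_s\otimes\al_t)(\be_{s,t} y_{s+t}) $. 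By linearity the square commutes for all of $ E_{s+t} $, so $ (\al_t)_{t\in\Q_+} $ is the desired representation. I expect the only steps needing care to be the translation identification that makes the splitting $ g_{s+t}\leftrightarrow(g_s,g_t) $ and the additivity of $ \|g_{s+t}\|^2 $ hold simultaneously, and the bookkeeping of which half of $ \ti E_{s+t} $ corresponds to $ \ti E_s $ — harmless here since $ g_t $ is a constant multiple of an indicator — together with recalling that the Fock system above is of type $ I_1 $; everything else is forced once $ g_t $ is chosen with $ \|g_t\|^2 $ linear in $ t $ and $ \E^{-\|g_t\|^2/2} = a^t $.
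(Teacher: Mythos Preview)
Your proof is correct and follows essentially the same idea as the paper's: map $x_t$ and $y_t$ to two normalised units in a type $I_1$ Arveson system whose inner product is $a^t$. The paper simply asserts that such units exist in any type $I_1$ system and stops there, whereas you carry out the construction explicitly in the Fock model with the vacuum and a coherent vector $\psi(c\,\One_{(0,t)})$; this is the same argument made concrete.
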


\begin{proof}
It is straightforward: in an Arveson system of type $ I_1 $ we choose two
units $ (u_t)_t $, $ (v_t)_t $ such that $ \| u_t \| = 1 $, $ \| v_t \| = 1 $
and $ \ip{ u_t }{ v_t } = a^t $ for all $ t \in [0,\infty) $, and let $
\al_t(x_t) = u_t $, $ \al_t(y_t) = v_t $ for all $ t \in \Q_+ $.
\end{proof}

\begin{lemma}\label{4.7}
Every rational-time subproduct system satisfying Condition (3) of Theorem
\ref{4.5} admits a representation in an Arveson system of type $ I_1 $.
\end{lemma}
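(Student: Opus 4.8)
The plan is to represent the given rational-time subproduct system inside the type $ I_1 $ Arveson system, which I realize concretely as the symmetric Fock product system $ \ti E_t = \Gamma\(L^2(0,t)\) $: here $ \Om_t $ denotes the vacuum vector and $ \ti\be_{s,t} $ is the canonical isomorphism $ \Gamma\(L^2(0,s+t)\) \to \Gamma\(L^2(0,s)\) \otimes \Gamma\(L^2(0,t)\) $ induced by the decomposition $ L^2(0,s+t) = L^2(0,s) \oplus L^2(s,s+t) $ followed by the translation $ L^2(s,s+t) \cong L^2(0,t) $. Two standard properties of this factorization are all that is needed: first, $ \ti\be_{s,t}(\Om_{s+t}) = \Om_s \otimes \Om_t $; second, identifying $ L^2(0,r) $ with the one-particle subspace of $ \Gamma\(L^2(0,r)\) $ (isometrically, and orthogonally to $ \Om_r $), a one-particle vector $ g \in L^2(0,s+t) $ satisfies $ \ti\be_{s,t}(g) = g' \otimes \Om_t + \Om_s \otimes g'' $, where $ g' \in L^2(0,s) $ is the restriction of $ g $ to $ (0,s) $ and $ g'' \in L^2(0,t) $ is given by $ g''(v) = g(v+s) $.

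First I would fix the scalar function $ g $ on $ (0,\infty) $ defined by $ g(u) = C\, c^u \E^{\I bu} $, with $ c,b $ as in Condition (3) of Theorem \ref{4.5} and $ C > 0 $ to be chosen; for $ t \in \Q_+ $ write $ g_t \in L^2(0,t) $ for its restriction to $ (0,t) $. Then define $ \al_t : E_t \to \ti E_t $ for $ t \in \Q_+ $ by $ \al_t(x_t) = \Om_t $ and $ \al_t(y_t) = g_t $, extended $ \C $-linearly. Since $ \{ x_t , y_t/\|y_t\| \} $ is an orthonormal basis of $ E_t $ and $ \Om_t \perp g_t $ with $ \| g_t \|_{\ti E_t} = \| g_t \|_{L^2} $, the operator $ \al_t $ is an isometric embedding provided $ \| g_t \|_{L^2}^2 = \| y_t \|^2 $ for every $ t $. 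Now $ \| g_t \|_{L^2}^2 = C^2 \int_0^t c^{2u}\,du $, which equals $ C^2 t $ when $ c = 1 $ and $ C^2 (c^{2t}-1)/(2\ln c) $ when $ c \ne 1 $; hence the choice $ C = 1 $ for $ c = 1 $, and $ C = \sqrt{ 2\ln c/(c^2-1) } $ for $ c \ne 1 $ (the radicand being positive because $ \ln c $ and $ c^2-1 $ have the same sign), produces exactly $ \| g_t \|_{L^2}^2 = \| y_t \|^2 $.

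It then remains to verify that the square of Definition \ref{4.1} commutes for all $ s,t \in \Q_+ $, i.e. on each of the basis vectors $ x_{s+t},y_{s+t} $ of $ E_{s+t} $. On $ x_{s+t} $ both composites give $ \Om_s \otimes \Om_t $. On $ y_{s+t} $ the composite $ \ti\be_{s,t}\circ\al_{s+t} $ gives $ \ti\be_{s,t}(g_{s+t}) = g_s \otimes \Om_t + \Om_s \otimes h $ with $ h(v) = g_{s+t}(v+s) = g(v+s) = c^s\E^{\I bs}\, g(v) = c^s\E^{\I bs}\, g_t(v) $; the composite $ (\al_s\otimes\al_t)\circ\be_{s,t} $ gives $ (\al_s\otimes\al_t)\(\be_{s,t}(y_{s+t})\) = (\al_s\otimes\al_t)\( y_s\otimes x_t + c^s\E^{\I bs}\, x_s\otimes y_t \) = g_s\otimes\Om_t + c^s\E^{\I bs}\, \Om_s\otimes g_t $, and the two coincide. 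Thus $ (\al_t)_{t\in\Q_+} $ is a representation of the given subproduct system in a type $ I_1 $ Arveson system.

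There is no essential obstacle. The only points requiring care are the bookkeeping in the Fock-space factorization — so that the translation identity $ g(v+s) = c^s\E^{\I bs} g(v) $, which holds precisely because $ g $ is an exponential, reproduces the cocycle factor $ c^s\E^{\I bs} $ exactly where the relation $ \be_{s,t}(y_{s+t}) = y_s\otimes x_t + c^s\E^{\I bs}\, x_s\otimes y_t $ demands it — and the calibration of the normalization constant $ C $ so that the one-particle vectors $ g_t $ carry the prescribed norms $ \| y_t \|^2 $; both are routine.
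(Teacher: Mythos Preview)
Your proof is correct and takes essentially the same approach as the paper: embed into the symmetric Fock product system, send $x_t$ to the vacuum and $y_t$ to the one-particle vector $u \mapsto C\,c^u\E^{\I bu}$ on $(0,t)$, with the same normalizing constant $C$. Your write-up is in fact more detailed than the paper's, which states the construction but leaves the verification of the diagram and the norm identity implicit.
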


\begin{proof}
We take the type $ I_1 $ Arveson system of symmetric Fock spaces
\[
\ti E_t = \bigoplus_{n=0}^\infty \( L_2(0,t) \)^{\otimes n} = \C \oplus
L_2(0,t) \oplus L_2(0,t) \otimes L_2(0,t) \oplus \dots
\]
and map $ E_t $ into the sum of the first two terms,
\begin{gather*}
\al_t : E_t \to \C \oplus L_2(0,t) \subset \ti E_t \, , \\
\al_t (x_t) = 1 \oplus 0 \, , \quad \text{(the vacuum vector)} \\
\al_t (y_t) = 0 \oplus f_t \, , \quad f_t \in L_2(0,t) \text{ is defined by} \\
f_t (s) = A c^s \E^{\I bs} \quad \text{for } s \in (0,t) \, ,
\end{gather*}
$ A $ being the normalizing constant,
\[
A = \begin{cases}
 1 &\text{if } c=1, \\
 \sqrt{ \frac{ 2\ln c }{ c^2-1 } } &\text{if } c \ne 1.
\end{cases}
\]
\end{proof}

A representation (as defined by \ref{4.1}) will be called \emph{reducible,} if
it is also a representation in a proper Arveson subsystem of the given Arveson
system. Otherwise the representation is \emph{irreducible.} All irreducible
representations (if any) of a given rational-time subproduct system are
mutually isomorphic. In this sense a rational-time subproduct system either
extends uniquely to the corresponding Arveson system, or is not embeddable
into Arveson systems.

The representations constructed in Lemmas \ref{4.6}, \ref{4.7} are evidently
irreducible.

\begin{theorem}
If a rational-time subproduct system has an irreducible representation in an
Arveson system then the Arveson system is of type $ I_1 $.
\end{theorem}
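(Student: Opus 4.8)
The plan is to combine the structural results already in place and reduce the statement to transport of type along an isomorphism of representations.

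First I would observe that an irreducible representation is in particular a representation, so the hypothesis puts us in the situation of Theorem~\ref{4.5}: after choosing the vectors $x_t,y_t\in E_t$ suitably, the given rational-time subproduct system $(E_t,\be_{s,t})_{t\in\Q_+}$ satisfies either Condition~(1) with some $a\in(0,1)$ or Condition~(3) with some $c\in(0,\infty)$ and $b\in\R$. Second, in the Condition~(1) case Lemma~\ref{4.6} exhibits a representation of this very subproduct system in an Arveson system of type $I_1$ (an exponential system with a pair of units at ``angle'' $a$), and in the Condition~(3) case Lemma~\ref{4.7} exhibits one in the type~$I_1$ Arveson system of symmetric Fock spaces with $f_t(s)=Ac^s\E^{\I bs}$; as remarked right after Lemma~\ref{4.7}, both of these model representations are irreducible.

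Third, I would invoke the fact recorded just before the theorem: all irreducible representations of a given rational-time subproduct system are mutually isomorphic. Applying this to the irreducible representation $(\al_t)_t$ in the given Arveson system $(\ti E_t)_t$ and to the model $I_1$-representation furnished by Lemma~\ref{4.6} or~\ref{4.7}, we obtain an isomorphism of representations, and hence an isomorphism of the ambient Arveson systems, carrying $(\ti E_t)_t$ onto an Arveson system of type $I_1$. Since the type of an Arveson system is by definition an isomorphism invariant, this forces $(\ti E_t)_t$ to be of type $I_1$, which is the assertion.

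The one point I would treat as the main obstacle is the passage from ``isomorphic representations'' to ``isomorphic Arveson systems.'' Irreducibility is exactly what is needed here: a reducible representation may live inside an arbitrarily large Arveson system of arbitrary type, whereas an irreducible one generates (in the Arveson-subsystem sense, using that $\Q_+$ is dense in $\R_+$) the whole ambient system, so the intertwining unitaries $\theta_t$ of the representation isomorphism necessarily constitute an isomorphism of the two full Arveson systems rather than merely of subsystems. Once this is spelled out, the remaining steps are direct citations of Theorem~\ref{4.5}, Lemmas~\ref{4.6} and~\ref{4.7}, and the uniqueness-up-to-isomorphism of irreducible representations.
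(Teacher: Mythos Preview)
Your proposal is correct and follows essentially the same route as the paper: invoke Theorem~\ref{4.5} to reduce to Conditions~(1) or~(3), use Lemmas~\ref{4.6} and~\ref{4.7} to produce irreducible model representations in type~$I_1$ systems, and then appeal to the uniqueness (up to isomorphism) of irreducible representations stated just before the theorem. The paper's proof is the one-line ``Combine Theorem~\ref{4.5} and Lemmas~\ref{4.6},~\ref{4.7}''; you have simply unpacked this, including the implicit use of the uniqueness remark and the passage from isomorphism of irreducible representations to isomorphism of the ambient Arveson systems.
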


\begin{proof}
Combine Theorem \ref{4.5} and Lemmas \ref{4.6}, \ref{4.7}.
\end{proof}

\bigskip
\filbreak
{
\small
\begin{sc}
\parindent=0pt\baselineskip=12pt
\parbox{4in}{
Boris Tsirelson\\
School of Mathematics\\
Tel Aviv University\\
Tel Aviv 69978, Israel
\smallskip
\par\quad\href{mailto:tsirel@post.tau.ac.il}{\tt
 mailto:tsirel@post.tau.ac.il}
\par\quad\href{http://www.tau.ac.il/~tsirel/}{\tt
 http://www.tau.ac.il/\textasciitilde tsirel/}
}

\end{sc}
}
\filbreak

\end{document}